\documentclass[11pt]{amsart}
\usepackage{graphicx,amsmath,amssymb,latexsym,bbm,mathtools,wasysym}
\usepackage{enumerate}
\usepackage{fullpage}
\usepackage{subfigure}
\usepackage{todonotes}
\usepackage[normalem]{ulem}

\makeatletter
\@namedef{subjclassname@2020}{%
  \textup{2020} Mathematics Subject Classification}
\makeatother

\vfuzz2pt 
\hfuzz2pt 
\newtheorem{theorem}{Theorem}[section]

\newtheorem{lemma}[theorem]{Lemma}
\newtheorem{proposition}[theorem]{Proposition}
\theoremstyle{definition}
\newtheorem{definition}[theorem]{Definition}
\theoremstyle{remark}
\newtheorem{remark}[theorem]{Remark}
\theoremstyle{remark}
\newtheorem{example}{Example}
\numberwithin{equation}{section}

\newcommand{\set}[1]{\left\{#1\right\}}
\newcommand{\R}{\mathbb R}

\newcommand{\N}{\mathbb N}

\newcommand{\PP}{\mathbb P}
\newcommand{\UU}{\mathbbm 1}
\newcommand{\BB}{{\mathcal B}}

\newcommand{\FF}{{\mathcal F}}

\newcommand{\PPP}{{\mathcal P}}

\newcommand{\eps}{\varepsilon}
\newcommand{\mixset}{rapid $\psi$-mixing}


\begin{document}

\title[]{Almost sure limit theorems with applications to non-regular continued fraction algorithms}
\author{Claudio Bonanno}
\address{Dipartimento di Matematica, Universit\`a di Pisa, Largo Bruno Pontecorvo 5, 56127 Pisa, Italy}
\email{claudio.bonanno@unipi.it}

\author{Tanja I. Schindler}
\address{Faculty of Mathematics and Computer Science, Jagiellonian University in
Krakow, Poland and\\ Department of Mathematics and Statistics, University of Exeter, UK and\\
Faculty of Mathematics, University of Vienna, Oskar-Morgenstern-Platz 1, 1090 Vienna, Austria}
\email{tanja.schindler@uj.edu.pl, t.schindler@exeter.ac.at, tanja.schindler@univie.ac.at}

\begin{abstract}
We consider a conservative ergodic measure-preserving transformation $T$ of the measure space $(X,\BB,\mu)$ with $\mu$ a $\sigma$-finite measure and $\mu(X)=\infty$. Given an observable $g:X\to \R$, it is well known from results by Aaronson, see \cite{aa-book}, that in general the asymptotic behaviour of the Birkhoff sums $S_Ng(x):= \sum_{j=1}^N\, (g\circ T^{j-1})(x)$ strongly depends on the point $x\in X$, and that there exists no sequence $(d_N)$ for which $S_Ng(x)/d_N \to 1$ for $\mu$-almost every $x\in X$. In this paper we consider the case $g\not\in L^1(X,\mu)$ and continue the investigation initiated in \cite{no1}. We show that for transformations $T$ with strong mixing assumptions for the induced map on a finite measure set, the almost sure asymptotic behaviour of $S_Ng(x)$ for an unbounded observable $g$ may be obtained using two methods, addition to $S_Ng$ of a number of summands depending on $x$ and trimming. The obtained sums are then asymptotic to a scalar multiple of $N$. The results are applied to a couple of non-regular continued fraction algorithms, the backward (or R\'enyi type) continued fraction and the even-integer continued fraction algorithms, to obtain the almost sure asymptotic behaviour of the sums of the digits of the algorithms. 
\end{abstract}

\subjclass[2020]{37A40, 37A25, 60F15, 11K50}
\keywords{Infinite ergodic theory; almost sure limits for Birkhoff sums; trimmed sums; non-regular continued fraction algorithms}
\thanks{CB is partially supported by 
the PRIN Grant PRIN 2022NTKXCX ``Stochastic properties of dynamical systems'' funded by the Ministry of University and Scientific Research, Italy. TS is partially supported by the Austrian Science Fund FWF: P 33943-N and by the
MSCA Project ErgodicHyperbolic - p.n.\ 101151185.\\ CB acknowledges the MIUR Excellence Department Project awarded to the Department of Mathematics, University of Pisa, CUP I57G22000700001.\\
This research is part of the authors' activity within the UMI Group ``DinAmicI'' \texttt{www.dinamici.org} and of CB's activity within the 
Gruppo Nazionale di Fisica Matematica, INdAM, Italy. TS acknowledges the support of the University of Pisa through the ``visiting fellows'' program, as this work was partially done during her visit at the Dipartimento di Matematica of the University of Pisa.}

\maketitle

\section{Introduction} \label{sec:intro}

Let $T:X\to X$ be a conservative ergodic measure-preserving transformation of the measure space $(X,\BB,\mu)$ with $\mu$ a $\sigma$-finite measure. Let $g:X\to\mathbb{R}$ be a measurable observable on $X$ and define the \emph{Birkhoff sums}
\[
S_Ng(x):=\sum_{n=1}^N\, (g\circ T^{n-1})(x).
\]
In case that $\mu$ is a probability measure and $g\in L^1(X,\mu)$ we get by Birkhoff's Ergodic Theorem for $\mu$-almost every (a.e.) $x\in X$ that 
\[
\lim_{N\to\infty} \frac{S_Ng(x)}{N}=\int g\, \mathrm{d}\mu\,.
\]
However, in the situation that $\mu$ is a probability measure and $g\not\in L^1(X,\mu)$ or in the situation that $\mu(X)=\infty$ and $g\in L^1(X,\mu)$, by Aaronson's theorem, \cite{aa-book}, we cannot obtain an analogous strong law of large numbers, i.e. we have for any norming sequence $(d_N)$ and $\mu$-a.e. $x\in X$ that a general non-negative observable satisfies
\begin{align*}
 \limsup_{N\to\infty}\frac{S_Ng(x)}{d_N}=\infty \quad \text{ or }\quad
 \liminf_{N\to\infty}\frac{S_Ng(x)}{d_N}=0.
\end{align*}

In the finite measure case (also in the case that we only consider i.i.d. random variables) one method to still obtain some information on the almost sure limit behaviour is to use \emph{trimming}: 

Given a sequence $(Y_n)$ of random variables on a probability space $(\Omega,\PP)$ and a point $\omega\in \Omega$, for each $N\in \N$ we choose a permutation $\pi$ of $\{1,2,\dots,N\}$ such that $Y_{\pi(1)}(\omega) \ge Y_{\pi(2)}(\omega) \ge \dots \ge Y_{\pi(N)}(\omega)$. For a given $r\in \N \cup \{0\}$, the \emph{lightly trimmed sum} of $(Y_n)$ is defined by
\begin{equation}\label{trimmed-birk-sum}
S_N^r(\omega) := \sum_{n=r+1}^N\, Y_{\pi(n)}(\omega) 
\end{equation}
that is the sum of the first $N$ random variables trimmed by the largest $r$ entries. In case $Y_n=g\circ T^{n-1}$, for all $n$, we also write $S_N^rg(x)$ for the trimmed Birkhoff sum at $x$. In some situations it is necessary to trim a number of entries increasing with $N$. The \emph{intermediately trimmed sum} of $(Y_n)$ is defined by
\begin{equation}\label{inter-trimmed-birk-sum}
S_N^{ r_N}(\omega) := \sum_{n=r_{N}+1}^N\, Y_{\pi(n)}(\omega) 
\end{equation}
where $(r_n)$ is a sequence satisfying $r_n \to \infty$ and $r_n/n\to 0$ as $n\to \infty$.
In the ergodic context the first strong laws under trimming have been proven for the regular continued fraction entries, see \cite{diamond_estimates_1986}, a result later also generalized for $\alpha$-continued fractions with $\alpha\geq 1/2$, see \cite{nak-nat-trim}. 
However, there are also (strong) laws of large numbers in the ergodic context dealing with a larger class of functions, see \cite{aar-nakada, kesseboehmer_strong_2019, kesseboehmer_intermediately_2019, schindler_observables_2018, kesseboehmer_mean_2019} and for related results for i.i.d.\ random variables see \cite{kestenmaller, haeusler_laws_1987, haeusler_nonstandard_1993, kesseboehmer_strongiid_2019}. 

On the contrary, for the second situation in Aaronson's theorem ($\mu$ being an infinite $\sigma$-finite measure and $g\in L^1(X,\mu)$), other methods of truncating the sum are necessary.
As shown in \cite{aar-kos-wei} considering the symmetric sum can not give almost sure convergence. However, under nice enough mixing conditions and for sufficiently regular observables, the authors of this paper have proven a strong law adding a number of summands, i.e. the existence of a sequence $(d_N)$ and a function $m:X\times \N\to\N$ such that for $\mu$-a.e. $x\in X$ we have
\begin{equation}\label{eq:no1 conv}
 \lim_{N\to\infty}\frac{S_{N+m(N,x)}g(x)}{d_N}=1\,,
\end{equation}
see \cite[Theorem 2.3]{no1}. We note that $m$ depends on $N$ and $x$ and can vary a lot for different values of $x$, but $(d_N)$ is independent of $x$. The precise definition of $m$ is recalled below in \eqref{def-m}.

Let us lastly have a look at the last, not yet discussed case, namely that $\mu(X)=\infty$ and $g\not\in L^1(X,\mu)$. In this setting there are trivial cases for a strong law to hold, e.g. one can set $g$ to be a constant $C$, then we have for all $x\in X$ that $\lim_{N\to\infty} S_Ng(x)/N=C$. In general this behaviour is not true though, see \cite{no1, lenci} for conditions in particular settings. The conditions in \cite{no1, lenci} all require implicitly $g$ to be integrable on finite-measure subsets of $X$. However, there are a number of interesting examples, for instance the observable taking as values the first digit of some generalised continued fraction expansions, which do not fall into this category, i.e. their underlying invariant measure is infinite but the interesting observable is unbounded and non-integrable on a finite-measure set. 

In this paper we investigate into a class of systems which fall into the lastly described setting and prove an almost sure limit theorem under the use of both methods - trimming and adding summands as in \eqref{eq:no1 conv}. Furthermore, we study a couple of concrete examples which fall exactly into this setting, namely the backward or R\'enyi type continued fractions and the even-integer continued fractions which were first introduced in \cite{renyi} and \cite{schw-even, schw-even-2} respectively. The latter one can also be seen as a special case of $\alpha$-continued fractions for $\alpha=0$. It might also worth mentioning that, though the backward continued fraction transformation is nothing else as a reflection at the vertical line at $x=1/2$ for the Gauss map, the ergodic properties of their digits are quite different, see e.g.\ \cite{ik-book, aar-nakada, takahasi}. 

The structure of the paper is as follows. We first introduce the general setting from \cite{no1}. Then in Section \ref{sec:results} we state our main results giving an application to the generalised continued fractions expansions in Subsection \ref{subsec: applications}. In Section \ref{sec: proof gen thm} we prove our main results and in Section \ref{sec: proofs cf} we give the proofs for our examples.

\section{The setting} \label{sec:setting}

Let $T:X\to X$ be a conservative ergodic measure-preserving transformation of the measure space $(X,\BB,\mu)$ with $\mu$ a $\sigma$-finite measure with $\mu(X)=\infty$, and let us fix a set $E\in \BB$ with $\mu(E)=1$. For $E$, we define the \emph{first return time} $\varphi_{_E}$  
\[
\varphi_{_E} : E \to \N,\qquad \varphi_{_E}(x):= \inf \set{k\ge 1\, :\, T^k(x)\in E}.
\]
The function $\varphi_{_E}$ is finite $\mu$-a.e. and induces a measurable partition of $E$ by using its level sets. Let 
\begin{equation} \label{level-sets}
A_n := \set{x\in E\, : \varphi_{_E}(x)=n},
\end{equation}
then $E = \bigsqcup_{k\ge 1}\, A_k$ up to zero measure sets. In the following we also use the super-level sets
\[
A_{>n} := \set{x\in E\, :\, \varphi_{_E}(x) > n} = \bigsqcup_{k>n}\, A_k,
\]
and the sets $A_{\ge n} = A_n \cup A_{>n}$. Applying Kac's Theorem one has
\[
\sum_{k\ge 1}\, k\, \mu(A_k) = \sum_{n\ge 0}\, \mu(A_{>n}) = \mu(X) = \infty,
\]
and the order of infinity of the previous series is an important indicator of the map $T$ which is independent on the choice of the set $E$. 
Furthermore, we define the \emph{wandering rate} 
\[
w_n(E):=\sum_{k=0}^{n-1} \mu(A_{>k}).
\]
We consider this indicator by looking at the sequence
\begin{equation} \label{alfa}
\alpha(n):= \frac{n}{w_n(E)}.
\end{equation}
In connection with the wandering rate we will also need the notion of \emph{slow variation}. A function $f:\mathbb{N}\to \mathbb{R}$ is called \emph{slowly varying} if for all $d>0$ we have $\lim_{N\to\infty}f(\lfloor dN\rfloor)/f(N)=1$, for details about this notion see \cite{reg-var-book}.

By using the first return time function, one defines the \emph{induced map} $T_{_E}:E \to E$,  an ergodic measure-preserving transformation of the probability space $(E,\BB|_E,\mu)$ given by $T_{_E}(x) := T^{\varphi_{_E}(x)}(x)$. By considering $T_{_E}$ we look at the orbits of $T$ by studying two properties: the visits to $E$ which are subsequently obtained by applying $T_{_E}$ and the excursions out of $E$. This method is particularly useful when studying the Birkhoff sums $S_N g$ of observables $g: X\to \R$.

We also recall from \cite{no1} the precise definition of  the \emph{longest excursion out of $E$ beginning in the first $N$-steps}, the quantity $m$ used in \eqref{eq:no1 conv} and defined $\mu$-a.e. in $X$ as  
\begin{equation}\label{def-m}
m(N,E,x) := 1+\max \set{ k\ge 1\, :\, \exists\, \ell \in \{1,\dots,N+1\} \text{ s.t. } T^{\ell+j}(x) \not\in E,\, \forall\, j=0,\dots,k-1}.
\end{equation}

To conclude this section, we comment on the properties needed to apply the trimming methods described in the introduction. In the literature, these methods have been applied to systems with ``good'' mixing properties. To proceed we recall the useful notions of $\psi$-mixing in the general case of random variables. We refer to \cite{bradley} for more definitions.

\begin{definition}\label{def-psi-mixing}
Let $(Y_n)$ be a sequence of random variables on a probability space $(\Omega,\PP)$, and let $\FF_h^k$, for $0\le h<k\le \infty$, be the $\sigma$-field generated by $(Y_n)_{h\le n\le k}$. The sequence $(Y_n)$ is \emph{$\psi$-mixing} if
\[
\psi(n) := \sup \left\{ \Big| \frac{\PP(B\cap C)}{\PP(B)\PP(C)} -1 \Big|\, :\, B\in \FF_0^j, \, C\in \FF_{j+n}^\infty,\, \PP(B)>0,\, \PP(C)>0,\, j\in \N \right\}
\]
satisfies $\psi(n)\to 0$ as $n\to \infty$.
\end{definition}

In our approach we need that the $\psi$-mixing condition is satisfied by the sequence of random variables $(f\circ T_{_E}^{n-1})_{n\in \N}$ for $f:E\to \R$ to be defined later, and that the sequence $(\psi(n))$ decays fast enough to apply Lemma \ref{lemma-aar-nakada}.

\begin{definition}\label{def-set-induce}
Let $E\in \BB$ be a finite measure set with $\mu(E)=1$ and such that $T_{_E}(A_n)=E$ a.s.\ for all $n\in \N$. We say that $E$ \emph{induces \mixset} if for any sequence $(\phi_n)_{n\in\mathbb{N}}$ of functions from $E$ to $\R$ such that each $\phi_n$ is piecewise constant on the partition $\mathcal{P}_{_E}$ of $E$ induced by $T_{_E}$, i.e. the finest partition such that for each element $P\in \mathcal{P}_{_E}$ we have $T_{_E}(P)=E$ a.s., the sequence $(\phi_n\circ T_{_E}^{n-1})_{n\ge 1}$ is $\psi$-mixing with coefficient $\psi(n)$ fulfilling $\sum_{n\ge 1} \psi(n)/n <\infty$.
\end{definition}

In the paper we use the following notations:
\begin{itemize}
\item For sequences $(a_{N,M})$, $(b_N)$, and $(c_M)$, we write $a_{N,M}\asymp b_N c_M$ if there exist constants $C>0$ and $N_0$ such that $C^{-1} a_{N,M}\leq b_N c_M \leq C a_{N,M}$, for all $M,N\geq N_0$ and analogously, we write $b_N\asymp c_N$ if there exist constants $C>0$ and $N_0$ such that $C^{-1} b_{N}\leq c_N \leq C b_{N}$, for all $M,N\geq N_0$. 
Moreover, we write $\sum_{N=1}^{\infty} b_N\asymp \sum_{N=1}^{\infty} c_N$ if $b_N\asymp c_N$ implying that each of the two sums is either finite, $\infty$ or $-\infty$.  
\item For sequences $(a_N)$, $(b_N)$, we write $a_N \sim b_N$ if $\lim_{N\to \infty} a_N/b_N =1$.
\item For a real function $a(y)$ defined on a neighbourhood of $+\infty$ and tending to $+\infty$ as $y\to +\infty$, the asymptotic inverse is a function $b(y)$ such that $a(b(y)) \sim b(a(y)) \sim y$ as $y\to +\infty$.
\end{itemize}

\section{The main results} \label{sec:results}

With the definitions from the previous section we are able to state our first main result. 

\begin{theorem}\label{thm:main1}
Let $T:X\to X$ be a conservative, ergodic, measure-preserving transformation of the measure space $(X,\BB,\mu)$ with $\mu$ a $\sigma$-finite measure with $\mu(X)=\infty$, and let $E\in \BB$ with $\mu(E)=1$ be a set which induces \mixset. Let $g:X \to \R_{\ge 0}$ be a measurable observable. We assume that:
\begin{itemize}
\item[(i)] The level sets fulfill 
\begin{align}
 \sum_{n\geq 1}\frac{n (\mu(A_{>n}))^2}{w_n(E)^2}<\infty\label{eq: cond level sets}
\end{align}
and $w_n(E)$ is slowly varying.
\item[(ii)] There exists a constant $\kappa>0$ such that $\mu(g>n) \sim \kappa \mu(A_{>n})$.
\item[(iii)] The function $g$ is locally constant on the partition $\PPP_{_E}$ of $E$, and $g\not\in L^1(E,\mu)$.
\item[(iv)] There exists $c\in \R$ such that $g \equiv c$ on $X\backslash E$.
\end{itemize}
Then, for $\mu$-a.e. $x\in X$ we have
\begin{equation}
\lim_{N\to \infty}\, \frac{S_{N+m(N,E,x)}g(x)- \max_{1\leq k\leq N+m(N,E,x)} (g\circ T^{k-1})(x) - c\, m(N,E,x)}{N}= c+\kappa,\label{eq:main1}
\end{equation}
where the subtraction of the last two terms in the numerator is necessary since for $\mu$-a.e. $x\in X$ we have that 
\begin{align} \label{eq-cfr}
 \limsup_{N\to \infty}\,\frac{\max_{1\leq k\leq N+m(N,E,x)} (g\circ T^{k-1})(x)}{N}
 =\limsup_{N\to \infty}\,\frac{m(N,E,x)}{N}=\infty. 
\end{align}
If, additionally to conditions (i)-(iv), we have that
\begin{align}
  \mu\left(\left\{g >N\right\} \cap \left\{\varphi_{_E} >M\right\}  \right)
  \asymp \mu\left(g >N\right) \mu \left(\varphi_{_E} >M \right)\label{eq:cond1}
\end{align}
 for $M,N\in\mathbb{N}$, then we obtain the slightly stronger result that for $\mu$-a.e. $x\in X$
 \begin{align}
 \lim_{N\to \infty}\, \frac{S_{N+m(N,E,x)}g(x)- \max\left\{\max_{1\leq k\leq N+m(N,E,x)} (g\circ T^{k-1})(x), c\, m(N,E,x)\right\}}{N}=c+\kappa.\label{eq:main2}
\end{align}
\end{theorem}

The interesting point here is that, as specified in assumption (ii), the order of infinity of the measure is the same as the order of infinity of the observable. We will see from the results in Section \ref{sec:other-res} what happens if we slightly change this. 

\begin{remark} \label{new-rem}
 Condition (iv) could be weakened to $g$ be bounded on $X\setminus E$ and satisfying the conditions in \cite[Theorem 2.7]{no1}. In that result, we have stated assumptions on the system and on an observable $g_c$ bounded on $E$ under which 
 \begin{align}\label{eq: gc asymp}
  S_N g_c(x) \sim cN.
 \end{align}
Let now $g:X \to \R_{\ge 0}$ be a measurable observable satisfying assumptions (ii) and (iii) of Theorem \ref{thm:main1} and which is bounded on $X\setminus E$. Then, setting $g_c\equiv c$ on $E$ and $g_c=g$ on $X\setminus E$, the function $f:= g-g_c$ satisfies (ii), (iii), and (iv) of Theorem \ref{thm:main1}. Moreover, if $g_c$ fulfills \eqref{eq: gc asymp} it is enough to study $S_Nf$ to obtain results on $S_Ng$. However, in order to not introduce more notation in the statement of the theorem we restrict ourselves to the easier condition (iv). This argument is fully explained in the proof, see \eqref{g-reduced}-\eqref{splitting}.
\end{remark}

\subsection{Statements for a flattened or increased observables}\label{sec:other-res} 

In this section we consider the effects of modifying assumption (ii) of Theorem \ref{thm:main1} on the result. 

Namely, under the otherwise same assumptions of Theorem \ref{thm:main1} on the system $(X,\mu,T)$ and the set $E$, we now look at situations where either $\mu(g>n)=o(\mu(A_{>n}))$ (Theorem \ref{thm:h-slow}) or $\mu(A_{>n})=o(\mu(g>n))$ (Theorem \ref{thm:h-fast}). The asymptotic behaviour of the Birkhoff sums in \eqref{eq:main1} is different and simpler if $\mu(g>n)$ is slower or faster than $\mu(A_{>n})$. In the former case, we obtain that it is enough to consider the lightly trimmed sum $S^1_Ng$ and under certain conditions it is even possible to obtain a strong law of large numbers where we neither have to add additional summands nor to trim the sum.

\begin{theorem}\label{thm:h-slow}
Let the space $(X,\BB,\mu)$, the transformation $T:X\to X$, the set $E\in \BB$ and $g:X \to \R_{\ge 0}$ a measurable observable be as in Theorem \ref{thm:main1}, 
with the sets $A_{>n}$ and the wandering rate $w_n(E)$ satisfying assumptions (i), (iii), and (iv) of Theorem \ref{thm:main1}.\\ If furthermore we assume
\begin{itemize}
\item[(iia)] $\mu(g>n) = o(\mu(A_{>n}))$. 
\end{itemize}
Then, for $\mu$-a.e. $x\in X$ we have
\begin{equation} \label{slow-first-r}
\lim_{N\to \infty}\, \frac{S_N g(x) - \max_{1\le k\le N}\, (g \circ T^{k-1})(x)}{N} = c.
\end{equation}
A stronger result is obtained under a stronger assumption. Let $\beta$ be the asymptotic inverse of $\alpha$ given in \eqref{alfa}, and additionally to (i), (iii), and (iv) of Theorem \ref{thm:main1}, let us assume
\begin{itemize}
\item[(ii\~{a})] $\mu(g>n) = o(\mu(A_{>n}))$ and $\sum_{n=1}^{\infty}\mu(g>\epsilon \beta(n))<\infty$ for all $\epsilon>0$. 
\end{itemize}
Then, for $\mu$-a.e. $x\in X$ we have
\begin{equation} \label{slow-second-r}
\lim_{N\to \infty}\, \frac{S_N g(x)}{N} = c.
\end{equation}
\end{theorem}
 
\begin{remark}
 It is easy to construct examples for which $\int_E g\,\mathrm{d}\mu=\infty$ and which fulfill all the strongest assumptions of Theorem \ref{thm:h-slow}. Let e.g.\ 
 $\mu(A_{>n})\sim 1/n$ as in the special continued fraction transformations described in the next subsection
 and let $g$ be such that $\mu(g>n)=1/ (n(\log\log n)^2)$. Obviously $g$ is not integrable. However, we have that $\alpha(n)\sim n/\log(n)$ and thus $\beta(n)\sim n\log(n)$. Hence,
 $\mu(g>\eps \beta(n))\leq 2/(\eps n\log (n)(\log\log (n))^2)$, for $n$ sufficiently large. Thus, 
 $\sum_{n=1}^{\infty} \mu(g>\eps \beta(n))<\infty$ and (ii\~{a}) is fulfilled. 
\end{remark}

We now consider the case of $\mu(g>n)$ faster than $\mu(A_{>n})$. As expected we need to trim the Birkhoff sums of $g$ but we also need to consider the sums up to time $N+m(N,E,x)$. The result of the limit however does not depend on the value of $g$ on $X\setminus E$. 

\begin{theorem}\label{thm:h-fast}
Let the space $(X,\BB,\mu)$, the transformation $T:X\to X$, the set $E\in \BB$
and $g:X \to \R_{\ge 0}$ a measurable observable be as in Theorem \ref{thm:main1}, 
with the sets $A_{>n}$ and the wandering rate $w_n(E)$ satisfying assumptions (i), (iii) and (iv) of Theorem \ref{thm:main1}. Furthermore, let us assume
\begin{itemize}
\item[(iib)] $\mu(g>n)/\mu(A_{>n}) \to \infty$. 
\end{itemize}
Furthermore, for $F(y)=1-\mu(g>y)$ assume that for some $y_0>1$ the quantity
\[
 W\coloneqq \inf \left\{r\in\mathbb{N}\colon \int_{y_0}^{\infty}\left(\frac{y\left(1-F(y)\right)}{\int_{y_0}^y\left(1-F(t)\right)\mathrm{d}t}\right)^{r+1} \frac{1}{y}\, \mathrm{d}y<\infty\right\}
\]
is finite. Then, we have for $\mu$-a.e.\ $x\in X$
\[
\lim_{N\to \infty}\, \frac{S^{W}_{N+m(N,E,x)} g(x) - c\, m(N,E,x)}{b(\alpha(N))} = 1,
\]
where $b(n)$ is the asymptotic inverse function of $a(y):=y/\int_{y_0}^y  \mu(g>t)\,\mathrm{d}t$, and $\alpha(n)$ is defined in \eqref{alfa}. 
\end{theorem}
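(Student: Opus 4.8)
The plan is to reduce Theorem~\ref{thm:h-fast} to the analysis of the induced map, just as in the proof of Theorem~\ref{thm:main1}, and then to import the intermediate-trimming machinery for $\psi$-mixing sequences (Lemma~\ref{lemma-aar-nakada} and the results quoted from \cite{aar-nakada}). First I would use condition (iv) to split $S_{N+m(N,E,x)}g(x)$ into the contribution of the excursions outside $E$ and the contribution of the returns to $E$: by the definition \eqref{def-m} of $m(N,E,x)$, the orbit up to time $N+m(N,E,x)$ consists of exactly the first $N$ (or so) visits to $E$ together with all the intervening excursions, which by (iv) contribute $c\,(N+m(N,E,x)-\#\{\text{visits to }E\})$; a standard renewal/Hopf-ratio argument (as in \cite{no1}) shows the number of visits is $\sim \alpha(N)$, so after subtracting $c\,m(N,E,x)$ and the deterministic bulk $cN$ the problem becomes the study of $\sum_{j=1}^{\alpha(N)} g^\ast(T_{_E}^{j-1}x)$, where $g^\ast$ is the induced observable $g^\ast = g|_E + c(\varphi_{_E}-1)$ — but since we trim the $W$ largest terms, and the dominant fluctuations come from the large values of $g|_E$ itself (because $\mu(g>n)/\mu(A_{>n})\to\infty$ by (iib)), the $c(\varphi_{_E}-1)$ part is negligible and effectively we are trimming $\sum_{j=1}^{\alpha(N)} g(T_{_E}^{j-1}x)$.

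Next I would invoke the distributional hypothesis. Because $g$ is locally constant on $\PPP_{_E}$ (condition (iii)) and $E$ induces rapid $\psi$-mixing, the sequence $(g\circ T_{_E}^{\,n-1})_{n\ge1}$ is $\psi$-mixing with $\sum_n \psi(n)/n<\infty$, which is precisely the setting in which the intermediately-trimmed strong law of \cite{aar-nakada}/\cite{kesseboehmer_intermediately_2019} applies. The tail $1-F(y)=\mu(g>y)$ determines the truncation function $a(y)=y/\int_{y_0}^y\mu(g>t)\,\mathrm dt$ and its asymptotic inverse $b$; the finiteness of the quantity $W$ from \eqref{eq: cond AN} is exactly the condition guaranteeing that trimming the $W$ largest terms suffices (it controls the number of ``record-sized'' entries one must discard so that the remaining sum concentrates). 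Applying that result along the deterministic subsequence $n=\alpha(N)$ gives $S^{W}_{\alpha(N)}g(T_{_E}^{\cdot}x)\sim b(\alpha(N))$ almost surely.

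Finally I would deal with the discrepancy between the clean index $\alpha(N)$ and the true number of returns before time $N+m(N,E,x)$, and between the trimmed induced sum and the trimmed sum $S^W_{N+m(N,E,x)}g(x)$ along the original orbit. For the former, condition (i) — the summability \eqref{eq: cond level sets} together with slow variation of $w_n(E)$ — is what forces the number of visits to concentrate tightly enough around $\alpha(N)$ that the Borel–Cantelli argument goes through on the full (non-lacunary) sequence $N$; this is the same lemma used in \cite{no1} and in the proof of Theorem~\ref{thm:main1}, so I would cite it. For the latter, I note that the trimming commutes with the time-change up to the excursion terms, which are $O(m(N,E,x))$ large individual entries each of size $c$, and these are either absorbed into the $W$ trimmed slots or are of smaller order than $b(\alpha(N))$ — here one uses (iib) again to see that $c\,\varphi_{_E}$ never competes with the genuinely large values of $g$. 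Putting the pieces together yields \[\frac{S^{W}_{N+m(N,E,x)}g(x)-c\,m(N,E,x)}{b(\alpha(N))}\to 1\] for $\mu$-a.e.\ $x$.

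I expect the main obstacle to be the bookkeeping in the last paragraph: precisely matching the trimmed sum along the $T$-orbit with the trimmed sum of the induced observable along the $T_{_E}$-orbit, while simultaneously controlling the random index shift caused by $m(N,E,x)$ and by the fluctuation of the return-count around $\alpha(N)$. Getting all three error terms below $b(\alpha(N))$ uniformly (almost surely, via Borel–Cantelli) — and in particular verifying that removing the $W$ largest entries of the $T$-orbit sum removes the same entries as removing the $W$ largest of the induced sum, up to a negligible correction — is the delicate step; everything else is an adaptation of \cite{no1} and a citation of the $\psi$-mixing intermediate-trimming theorem of \cite{aar-nakada}.
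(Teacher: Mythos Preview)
Your strategy is the same as the paper's --- reduce to the induced system, apply Lemma~\ref{lemma-aar-nakada} to the $\psi$-mixing sequence of observables under $T_{_E}$, then use Lemma~\ref{useful-3} to replace the random return count by $\alpha(N)$ --- but you are making the bookkeeping much harder than it needs to be, and this is the source of the ``obstacles'' you anticipate in your last paragraph.

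The key simplification you are missing is the one already used in the proof of Theorem~\ref{thm:main1}: do \emph{not} work with the full induced observable $g^\ast=g|_E+c(\varphi_{_E}-1)$. Instead set $f:=g-c$, so that by (iv) $f$ vanishes on $X\setminus E$ and hence $f^E=f$ (equation~\eqref{induced-obs-reduced}). Then $S_{N+m}g(x)=c(N+m)+\sum_{n=1}^{R_{_{E,N,m}}(x)-1}(f\circ T_{_E}^{n-1})(x)$ exactly, with no $\varphi_{_E}$ term to control. For the trimming, after the standard reduction (add a constant so that $\inf_E g\ge c$, as at the start of the proof of Theorem~\ref{thm:main1}) every term of $g$ along the $T$-orbit that lies outside $E$ equals $c$, the \emph{minimum} value --- not a ``large individual entry'' as you write. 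Hence the $W$ largest terms of $(g\circ T^{k-1})_{k\le N+m}$ are automatically located at return times, and one has the exact identity
\[
S^{W}_{N+m(N,E,x)}g(x)-c\,m(N,E,x)=c\,N+S^{W}_{R_{_{E,N,m}}(x)-1}f(x)-cW.
\]
There is no ``delicate step'' of matching trimmed sums: it is a one-line identity.

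Two further points. First, the relevant trimming here is \emph{light} trimming (fixed $W$), governed by Lemma~\ref{lemma-aar-nakada}; the intermediate-trimming results of \cite{kesseboehmer_intermediately_2019} are not needed. Second, you never state the crucial final ingredient: from (iib) one gets $a(n)=o(\alpha(n))$ and hence $N=o(b(\alpha(N)))$, which is what kills the $cN$ and $cW$ terms in the display above. Without this the limit would not be $1$.
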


\subsection{Applications for non-regular continued fractions}\label{subsec: applications}

We now show an application to the sums of the coefficients of two continued fraction expansions, the backward and the even-integer continued fraction expansions. Analogous results can be proved with the same techniques for other expansions such as the odd-odd one (see \cite{lee}). 

\begin{example}[Backward (or R\'enyi type) Continued Fractions] \label{ex:bcf}
We define the Backward Continued Fraction transformation $T_{BCF}:[0,1]\to[0,1]$ by 
\[
T_{BCF}(x)=\left\{\frac{1}{1-x}\right\},
\]
where $\{x\} = x$ (mod 1), see Figure \ref{cf-fig}-(a).

 \begin{figure}[ht!]
 \begin{center}
 \subfigure[]
     {\includegraphics[width=5cm]{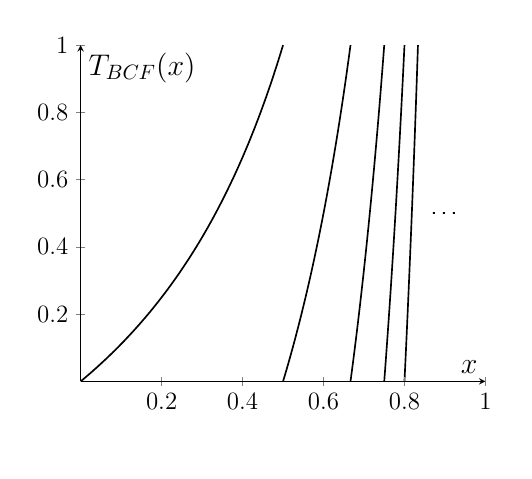}}
     \hspace{0.7cm}
     \subfigure[]
     {\includegraphics[width=6cm]{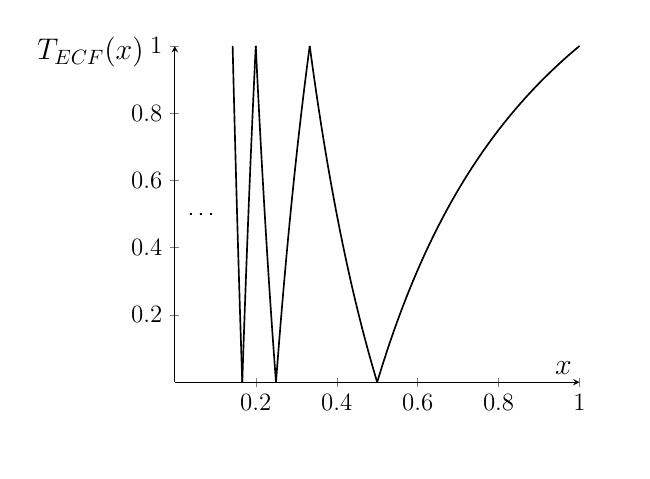}}
 \caption{(a) The Backward Continued Fraction transformation $T_{BCF}$.  (b) The Even-Integer Continued Fraction transformation $T_{ECF}$.} \label{cf-fig}
 \end{center}
 \end{figure}

The map $T_{BCF}$ has a neutral fixed point at $x=0$, as $T_{BCF}(0)=0$, $T'_{BCF}(0)=1$, and $T'_{BCF}(x)-1\sim 2x$ as $x\to 0^+$. As a consequence, $T_{BCF}$ preserves the infinite measure $\mu$ with density $k/x$ for $k>0$ (see \cite{renyi}), moreover it is conservative and ergodic on $([0,1],\mu)$. We will choose $k=1/\log 2$ so that setting $E:=[1/2,1]$ we have $\mu(E)=1$. For the following we define $I_n:=((n-1)/n, n/(n+1))$, $n\in \N$, hence $E$ is the closure of $\cup_{n\ge 2} I_n$. It is a classical result that every irrational $x\in [0,1]$ has a unique expansion of the form
\begin{equation} \label{coefficients-BCF}
x = 1 - \cfrac{1}{d_1-\cfrac{1}{d_2 - \dots}},
\end{equation}
with $d_j \in \N_{\ge 2}$ for all $j\ge 1$, and $d_j = k$ if and only if $T_{BCF}^{j-1}(x)\in I_{k-1}$ or equivalently if and only if
\[
\left\lfloor\frac{1}{1-T_{BCF}^{j-1}(x)}\right\rfloor = k-1
\]
(see, e.g., \cite{ik-book}).

Therefore, the partial sums of the coefficients of the backward continued fraction expansion of an irrational $x\in [0,1]$ are given by the Birkhoff partial sums of the observable
\begin{equation} \label{observ-bcf}
g:[0,1]\to \N, \quad g(x)=\left\lfloor\frac{1}{1-x}\right\rfloor +1
\end{equation}
for the transformation $T_{BCF}$. Note that $g$ is unbounded and non-integrable on $E$. In Section \ref{sec: proofs cf} we apply Theorem \ref{thm:main1} to $g$ to prove the following pointwise asymptotic behaviour. 

\begin{theorem}\label{cor:BCF}
For a.e.\ $x\in [0,1]$, the coefficients $\{d_j(x)\}$ of the expansion \eqref{coefficients-BCF} satisfy
\[
 \lim_{N\to \infty}\, \frac 1N\, \Bigg( \sum_{j=1}^{N+m(N,E,x)}\, d_j(x) - \max \left\{ 2\, m(N,E,x),\max_{1\leq k\leq N+m(N,E,x)} d_k(x) \right\} \Bigg) = 3.
\]
\end{theorem}

We recall that Aaronson \cite{aa-bcf} showed that $N^{-1}\sum_{j=1}^N d_j(x)$ converges in measure to 3, and Aaronson and Nakada \cite{aar-nakada} proved that the sequence $(N^{-1}\sum_{j=1}^N d_j(x))_N$ has ``maximal'' oscillations for a.e. $x$, that is the liminf is 2 and the limsup is $\infty$. For other stochastic properties of these sums we refer the reader to \cite{takahasi}.

Furthermore, we want to mention that it would be easily possible to apply Theorems \ref{thm:h-slow} and \ref{thm:h-fast} if we look at $h(d_j)$ instead of $d_j$, where $\lim_{n\to\infty} h(n)/n=0$ or $\lim_{n\to\infty} h(n)/n=\infty$ respectively.

In addition, for the backward continued fraction expansion we prove a result for rapidly increasing observables which needs additional techniques and is not applicable in the general framework of Theorem \ref{thm:h-fast}.

\begin{proposition}\label{prop:bcf-h-fast}
Let $\ell:\R\to \R$ be such that $\ell(t) \sim t^s$ with $s>1$, and $g:[0,1]\to \N$ be as in \eqref{observ-bcf}. 
Then for $u>1$ and $\mu$-a.e. $x\in X$ we have
\[
\lim_{N\to \infty}\, \frac{S^{(\log\log N)^u}_{N+m(N,E,x)}(\ell\circ g)(x)}{\gamma_N} = 1,
\]
where 
\[
\gamma_N\sim \frac{(\log 2)^s}{s-1}\, \left(\frac{N}{\log N}\right)^s\, (\log\log N)^{(1-s)u}.
\]
\end{proposition}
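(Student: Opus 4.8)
The plan is to reduce the statement to a trimmed-sum problem for the induced map $T_{_E}$ on $E=[1/2,1]$, then apply the kind of intermediate-trimming estimates underlying Theorem~\ref{thm:h-fast}, adapted to the specific tail behaviour of $\ell\circ g$. Write $f:=\ell\circ g$; on $E$ the level sets of $\varphi_{_E}$ coincide (up to the dynamics of the induced map) with the cylinders $I_n$, and one computes $\mu(A_{>n})=\mu(\varphi_{_E}>n)\sim (\log 2)^{-1}\cdot 1/n$, so that $w_n(E)\sim (\log 2)^{-1}\log n$ and $\alpha(n)\sim (\log 2)\, n/\log n$. Since $\ell(t)\sim t^s$ with $s>1$ and $\mu(g>n)\sim (\log 2)^{-1}/n$, we get $\mu(f>y)=\mu(g>\ell^{-1}(y))\sim (\log 2)^{-1} y^{-1/s}$, which is far heavier than $\mu(A_{>n})$: this is why Theorem~\ref{thm:h-fast} does not directly apply — the function $a(y)=y/\int_{y_0}^y\mu(f>t)\,\mathrm dt$ leads to a regularly varying $b$ of index $s$, but the required summability condition on the number of trimmed terms forces more than a bounded amount of trimming, hence the $(\log\log N)^u$ in the statement.

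First I would set up the excursion decomposition exactly as in the proof of Theorem~\ref{thm:main1}: for $\mu$-a.e.\ $x$, the Birkhoff sum $S_{N+m(N,E,x)}f(x)$ splits as $c\,m(N,E,x)$ (the contribution of the excursions outside $E$, where $g\equiv c$ on $X\setminus E$, so $f\equiv \ell(c)$ — here one absorbs the constant into the $m$-term exactly as in Remark~\ref{new-rem} and the manipulations \eqref{g-reduced}–\eqref{splitting}) plus a sum of the form $\sum_{n=1}^{\tau(N,x)} (f\circ T_{_E}^{n-1})(y)$ over roughly $\tau(N,x)\sim \alpha(N)$ returns to $E$, where $y$ is the first entry point. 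The key input is that $E$ induces rapid $\psi$-mixing, so $(f\circ T_{_E}^{n-1})_n$ behaves, for trimmed-sum purposes, like an i.i.d.\ sequence with marginal $\mu|_E$; this lets me invoke the i.i.d.\ intermediate-trimming machinery (Lemma~\ref{lemma-aar-nakada} and the results cited from \cite{kesseboehmer_intermediately_2019, haeusler_nonstandard_1993}). For an i.i.d.\ sequence with $\PP(f>y)\sim C y^{-1/s}$, summing $\tau\sim\alpha(N)$ terms and trimming the top $r_\tau$ of them yields, almost surely,
\[
\frac{S^{r_\tau}_\tau f - \text{(centering)}}{b(\tau)} \to 1,
\]
where $b$ is the asymptotic inverse of $a$ as in Theorem~\ref{thm:h-fast}, provided $r_\tau\to\infty$ slowly; the precise rate needed is $r_\tau\gg \log\log\tau$, and substituting $\tau\asymp\alpha(N)\asymp N/\log N$ converts this into the trimming level $(\log\log N)^u$ with $u>1$ and produces $b(\alpha(N))\sim \frac{(\log 2)^s}{s-1}\,(N/\log N)^s$; the extra factor $(\log\log N)^{(1-s)u}$ comes from the second-order correction in $b$ caused by the slowly varying part of the tail truncated at the trimming threshold (i.e., $b$ is regularly varying of index $s$ but with a slowly varying factor that, evaluated along $\alpha(N)$ and corrected for the $r$-dependence, contributes $(\log\log N)^{(1-s)u}$).

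The main obstacle I expect is precisely pinning down $\gamma_n$ with the correct slowly varying factor: one must track how the trimming level $r_N\sim(\log\log N)^u$ feeds through the formula for $b$. Concretely, if $V_k$ denotes the $k$-th largest of $\tau$ i.i.d.\ samples of $f$, then $V_{r}\approx \ell(\text{order }\tau/r)\approx (\tau/r)^s$ up to constants, and the trimmed sum is dominated by terms of size just below $V_r$; summing the Pareto-type tail from $V_\tau$ up to $V_r$ gives $\sum \sim \frac{s}{s-1} V_r \cdot (\text{count})$, and one has to be careful that the fluctuations of $S^{r_\tau}_\tau$ around its mean are $o(b(\alpha(N)))$ a.s., which is exactly where $u>1$ (rather than $u=1$) is used — it makes $\sum_N \Var(\cdots)/\gamma_N^2$ summable so that Borel–Cantelli along a subsequence plus a monotonicity/sandwiching argument in $N$ closes the a.s.\ statement. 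A secondary technical point is handling the randomness of $\tau(N,x)$ and of the entry point $y$ jointly with the trimming; this is managed as in \cite{no1} by conditioning on the return-time structure and using that $\tau(N,x)/\alpha(N)\to 1$ a.s.\ together with the slow variation of $b$, so replacing $\tau(N,x)$ by $\alpha(N)$ inside $b$ is harmless.
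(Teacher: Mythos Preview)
Your overall architecture is right and matches the paper: reduce to the induced system on $E=[1/2,1]$, use $R_{_{E,N,m}}(x)\sim\alpha(N)\sim (\log 2)\,N/\log N$, and recognise that the tail $\mu(\ell\circ g>y)\sim(\log 2)^{-1}y^{-1/s}$ forces intermediate rather than light trimming. Where you go wrong is in the mechanism that produces the factor $(\log\log N)^{(1-s)u}$ in $\gamma_N$.

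You treat $b$ as the asymptotic inverse of $a(y)=y/\int_{y_0}^y(1-F(t))\,\mathrm{d}t$ from Lemma~\ref{lemma-aar-nakada} and then try to extract the $(\log\log)$ factor as a ``second-order correction'' or a ``slowly varying part'' of that $b$. This cannot work: for a pure power tail $1-F(y)\asymp y^{-1/s}$ the inverse of $a$ is regularly varying of index $s$ with \emph{no} extra slowly varying factor, so there is nothing there to generate $(\log\log N)^{(1-s)u}$. The point is that the norming sequence for intermediately trimmed sums is \emph{not} the same object as the $b$ of Lemma~\ref{lemma-aar-nakada}; it depends explicitly on the trimming sequence $r_n$. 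The paper invokes \cite[Theorem~1.7, Remark~1.8, Proposition~1.12]{kesseboehmer_strong_2019} as a black box: for tails regularly varying with index $-1/s$ one gets
\[
b(n)\sim \frac{1}{s-1}\, r_n^{1-s}\, n^s,
\]
and the factor $(\log\log N)^{(1-s)u}$ drops out immediately from $r_n^{1-s}$ once one sets $r_n=(\log\log n)^u$ and substitutes $n=R_{_{E,N,m}}(x)\sim(\log 2)\,N/\log N$. Your heuristic ``$V_r\approx(\tau/r)^s$, sum the Pareto tail up to $V_r$'' does in fact lead to this same formula, but you then misattribute the resulting $r$-dependence to a slowly varying correction rather than to the truncation level itself.

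A second point: the applicability of \cite{kesseboehmer_strong_2019} here rests on more than rapid $\psi$-mixing. That paper's Property~$\mathfrak{D}$ is verified via its Proposition~1.12, which uses a spectral gap for the transfer operator of $T_{_E}$; this is exactly why the result is stated for the BCF map specifically and not in the generality of Theorem~\ref{thm:h-fast} (see the remark following the proposition). Your claim that $\psi$-mixing alone lets the sequence ``behave, for trimmed-sum purposes, like an i.i.d.\ sequence'' glosses over the actual hypothesis needed. Finally, the variance/Borel--Cantelli/sandwiching programme you sketch in your last paragraph is essentially an attempt to reprove the intermediate-trimming SLLN from scratch; once you cite the correct theorem from \cite{kesseboehmer_strong_2019} this is unnecessary.
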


\begin{remark}
 It is clear that in this case Theorem \ref{thm:h-fast} can not be applied as $W=\infty$ and we thus need intermediate trimming. 
 It is also worth mentioning that it would also be possible to generalize the above proposition to functions where $\ell(t)=t^sL(t)$ with $L$ a slowly varying function. However, we want to mainly emphasize here that in the particular situation where the transfer operator of the induced map $T_{_E}$ has a spectral gap, we can even apply methods for intermediate trimming. 
\end{remark}

\begin{remark}
 It would be interesting to extend the results on this example in the following way: 
 In \cite{kkv17,brs20} a random continued fraction transformation was considered, i.e. one chooses randomly to use the regular continued fraction transformation or the backward continued fraction transformation. The question arises if the almost sure limit results just proven here can also be extended to random continued fractions and if in that case the term $m(N,E,x)$ still plays a role.
 
 On the other hand it is worth noticing that the backward continued fraction can be seen as an $\alpha$-continued fraction as studied in \cite{nak-nat-trim} for $\alpha=0$. 
 For the regular continued fraction $\alpha$ equals $1$. The results in \cite{diamond_estimates_1986} have been 
 generalized in \cite{nak-nat-trim} for $\alpha$-continued fractions for $\alpha\in [1/2,1]$. For those parameters of $\alpha$ the absolutely continuous invariant measure is finite. However, for arbitrary $\alpha\in [1/2,1]$ they show weaker mixing properties than the regular continued fraction expansion. Thus, it would be interesting to study the case $\alpha\in (0,1/2)$. Those continued fraction expansions have an infinite invariant measure and also their mixing properties might be worse than those of the regular continued fractions.
\end{remark} 
\end{example}

\begin{example}[Even-Integer Continued Fractions] \label{ex:ecf}
The backward continued fraction expansion can be considered as an example of a large class of continued fraction algorithms introduced in \cite{flipped} which are obtained by ``flipping'' the Gauss map on a subset of $[0,1]$. Another interesting example of this class is the \emph{even-integer continued fraction} algorithm first studied in \cite{schw-even, schw-even-2}. Accordingly, all irrational $x\in [0,1]$ have a unique infinite expansion of the form
\begin{equation} \label{coefficients-ECF}
x = \cfrac{1}{2h_1 + \cfrac{\eps_1}{2h_2+ \cfrac{\eps_2}{2h_3 + \dots}}}
\end{equation}
with $h_j \in \N$ and $\eps_j \in \{-1, +1\}$ for all $j\ge 1$. This expansion can be obtained by looking at the \emph{Even-Integer Continued Fraction} transformation $T_{ECF}:[0,1]\to[0,1]$, defined by 
\[
T_{ECF}(x)=\left\{ \begin{array}{ll} 1-\left\{ \frac 1x\right\}, & \text{if $x\in \bigcup_{k\ge 1} \left( \frac{1}{2k},\frac{1}{2k-1} \right)$};\\[0.2cm] \left\{ \frac 1x\right\}, & \text{otherwise}
\end{array} \right..
\]
See Figure \ref{cf-fig}-(b). As for $T_{BCF}$, the map $T_{ECF}$ has a neutral fixed point at $x=1$ and preserves an infinite measure. 

To study the sums of the $h_j$'s in the expansion \eqref{coefficients-ECF}, notice that the map $T_{ECF}$ may also be defined by letting $T_{ECF}(x)= | 1/x - 2k|$ with $2k$ being the nearest even integer to $1/x$.
Furthermore, $h_j(x)=k$ if and only if $T_{ECF}^{j-1}(x)\in I_{2k-1}\cup I_{2k}$ where $I_k=(1/(k+1), 1/k)$ or in other words $2h_j(x)= (g \circ T_{ECF}^{j-1})(x)$, where 
\begin{align}
g:[0,1]\to\N, \qquad g(x)= 2 \left\lfloor \frac{1}{2x}+\frac 12\right\rfloor\label{observ-ecf tilde}
\end{align}
and $2h_j(x) + \frac 12(\eps_j(x)-1)= (\tilde{g}\circ T_{ECF}^{j-1})(x)$
with 
\begin{align}
\tilde{g}:[0,1]\to\N, \qquad \tilde{g}(x)=\left\lfloor \frac{1}{x}\right\rfloor.\label{observ-ecf}
\end{align}

In Section \ref{sec: proofs cf} we prove the following result.
\begin{theorem}\label{cor:ECF}
For a.e. $x\in [0,1]$, the coefficients $\{h_j(x)\}$ of the expansion \eqref{coefficients-ECF} satisfy
\[
 \lim_{N\to \infty}\, \frac 1N\, \Bigg[ \sum_{j=1}^{N+m(N,E,x)}\, 2h_j(x)  - \max \left\{ 2 m(N,E,x),\max_{1\leq k\leq N+m(N,E,x)} 2h_k(x) \right\} \Bigg] = 3.
\]
The same result holds with $2h_j(x) + \frac 12(\eps_j(x)-1)$ instead of $2h_j(x)$.
\end{theorem}
\end{example}

\section{Proofs of the general results}\label{sec: proof gen thm}

Before we start with the proof of Theorem \ref{thm:main1} we introduce some additional notation and auxiliary lemmas. Let $R_{_{E,N}}(x)$ be the \emph{number of visits to $E$ up to time $N$} along the orbit of a point $x$, that is
\[
R_{_{E,N}}(x) := \sum_{k=1}^{N+1}\, (\UU_{_E}\circ T^{k-1})(x),
\]
and for an observable $g$ let the induced observable $g^E$ defined on the full measure set of points $x\in E$ with finite first return time to be
\begin{equation} \label{induced-obs}
g^E(x) := \sum_{k=1}^{\varphi_{_E}(x)}\, (g\circ T^{k-1})(x).
\end{equation}
The induced observable simply gives the contribution to the Birkhoff sums $S_Ng$ of the excursions out of $E$ for the orbit of a point $x$. For $\mu$-a.e. $x\in E$ let's define the time $\tau_{_{E,x}}(N)$ of the $N$-th return to $E$, which corresponds to the Birkhoff sum of $\varphi_{_E}$ for the system $(E,T_{_E})$. We have
\[
\tau_{_{E,x}}(N) := \sum_{k=1}^N\, (\varphi_{_E} \circ T_{_E}^{k-1}) (x).
\]

For simplicity we now consider orbits starting in $E$, and putting together the introduced notations, we write that the last visit to $E$ for $x\in E$ up to time $N$ is $T_{_E}^{R_{_{E,N}}(x)-1}(x)$ and the time of this last visit is then given by $\tau_{_{E,x}}(R_{_{E,N}}(x)-1)\le N$. Then the following relation follows for $\mu$-a.e. $x\in E$
\[
S_Ng(x) = \sum_{n=1}^{R_{_{E,N}}(x) -1}\, (g^E \circ T^{n-1}_{_E})(x) + \sum_{k=\tau_{_{E,x}}(R_{_{E,N}}(x)-1)+1}^{N}\, (g\circ T^{k-1})(x),
\]
with the standard convention that a summation vanishes if the upper index is smaller than the lower index. 

It is immediate to verify that for $\mu$-a.e.\ $x\in E$
\begin{equation}\label{relations}
m(N,E,x) = \max\set{(\varphi_{_E}\circ T_{_E}^{k-1})(x)\, :\, k=1,\dots, R_{_{E,N}}(x)}.
\end{equation}
We use the notation $R_{_{E,N,m}}(x) := R_{_{E,N+m(N,E,x)}}(x)$ for the number of visits to $E$ in the first $N+m(N,E,x)$ steps of the orbit of $x$. 

The next result is proved in \cite[Lemma 4.4]{no1}.
\begin{lemma}[\cite{no1}] \label{useful-3}
Let $E$ induce \mixset\ and recall the definition of the sequence $(\alpha(n))$ in \eqref{alfa}. Then for $\mu$-a.e.\ $x\in E$ we have
\begin{equation}\label{fine-1}
\lim_{N\to \infty}\, \frac{R_{_{E,N,m}}(x)}{\alpha(N)} = 1
\end{equation}
\end{lemma}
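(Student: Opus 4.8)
\textbf{Proof proposal for Lemma \ref{useful-3}.}

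The statement to be proved is \eqref{fine-1}: for $\mu$-a.e.\ $x \in E$, $R_{_{E,N,m}}(x)/\alpha(N) \to 1$, where $R_{_{E,N,m}}(x) = R_{_{E,N+m(N,E,x)}}(x)$ counts the visits to $E$ in the first $N + m(N,E,x)$ steps. Since this is cited verbatim from \cite[Lemma 4.4]{no1}, the plan is simply to reconstruct the argument from that paper. My approach is to compare $R_{_{E,N,m}}(x)$ with the "pure" return-time count $R_{_{E,N}}(x)$ and use known infinite-ergodic asymptotics for the latter. First I would recall, via Hopf's ergodic theorem (or the Hopf ratio ergodic theorem applied to the return sequence of $T$), that for $\mu$-a.e.\ $x \in E$ one has $R_{_{E,N}}(x)/a_N \to 1$ where $a_N$ is the return sequence; under the slow-variation hypothesis on $w_n(E)$ built into the setting (assumption (i) of Theorem \ref{thm:main1}), $a_N$ is asymptotic to $\alpha(N) = N/w_N(E)$ by Thaler–Zweimüller / Karamata-type arguments. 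So the "base" statement $R_{_{E,N}}(x)/\alpha(N) \to 1$ a.e.\ should already be available (indeed it is presumably established earlier in \cite{no1}, whose framework this paper imports).

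The key step is then to control the discrepancy between $R_{_{E,N,m}}(x)$ and $R_{_{E,N}}(x)$. By definition $R_{_{E,N,m}}(x) - R_{_{E,N}}(x)$ counts the visits to $E$ occurring in the time window $(N, N + m(N,E,x)]$, and since $m(N,E,x)$ is (one more than) the longest excursion out of $E$ among excursions begun in the first $N+1$ steps, the orbit on that extra window is essentially sitting inside a single excursion before possibly returning; a crude bound gives $0 \le R_{_{E,N,m}}(x) - R_{_{E,N}}(x) \le R_{_{E, m(N,E,x)}}(T^{N}x) + O(1)$, or even more simply $R_{_{E,N,m}}(x) \le R_{_{E, 2(N + m(N,E,x))}}(x)$-type envelopes. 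The real content is therefore to show $m(N,E,x) = o(N)$ in a suitable averaged sense, or at least that $R_{_{E, m(N,E,x)}}$ composed along the orbit is $o(\alpha(N))$. Here I would use \eqref{relations}: $m(N,E,x) = \max\{ (\varphi_{_E} \circ T_{_E}^{k-1})(x) : 1 \le k \le R_{_{E,N}}(x)\}$, so $m(N,E,x)$ is the running maximum of the i.i.d.-like (in fact $\psi$-mixing) return-time observable $\varphi_{_E}$ over roughly $\alpha(N)$ induced steps. Standard extreme-value estimates for $\psi$-mixing sequences, together with the tail relation $\sum k\mu(A_k) = \infty$ and the regularity/summability condition \eqref{eq: cond level sets}, then bound this maximum: one gets $m(N,E,x) = o(N)$ a.e., because $\mu(A_{>n})$ summability-type control forces $\varphi_{_E}$-maxima over $\alpha(N)$ steps to grow slower than $N$. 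Finally, since $R_{_{E,\cdot}}$ is monotone and $\alpha$ is regularly varying (slowly varying $w_n$ gives $\alpha$ regularly varying of index $1$, hence "asymptotically linear up to slowly varying factors"), $R_{_{E,N+m(N,E,x)}}(x)/R_{_{E,N}}(x) \to 1$ follows from $m(N,E,x)/N \to 0$ via $\alpha(N+o(N))/\alpha(N) \to 1$, and combining with the base statement yields \eqref{fine-1}.

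I expect the main obstacle to be the extreme-value control of $m(N,E,x)$: showing rigorously that the running maximum of $\varphi_{_E}$ over $\alpha(N)$ induced iterations is $o(N)$ almost surely. The naive Borel–Cantelli argument needs $\sum_N \mu(\varphi_{_E} > \eps N) < \infty$ along the visit times, which is exactly where the hypothesis $\sum_{n} n(\mu(A_{>n}))^2/w_n(E)^2 < \infty$ from \eqref{eq: cond level sets} and slow variation of $w_n(E)$ enter — one must translate the "per-step" tail $\mu(A_{>n})$ into a statement along the random times $\tau_{_{E,x}}(\alpha(N))$, handle the $\psi$-mixing dependence (which only costs an $O(\psi(n))$ correction that is summable after dividing by $n$ by Definition \ref{def-set-induce}), and patch the subsequence argument to all $N$ using monotonicity of $N \mapsto R_{_{E,N,m}}(x)$ and of $N \mapsto m(N,E,x)$. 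Everything else — Hopf's theorem for the base asymptotics, Karamata for $\alpha$ regularly varying, monotone sandwiching — is routine. Since the lemma is quoted, I would in the write-up simply refer to \cite[Lemma 4.4]{no1} and sketch only this extreme-value heart of the argument.
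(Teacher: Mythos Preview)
Your proposal contains two fundamental errors that make the argument collapse.

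First, the ``base'' statement you take for granted --- that $R_{_{E,N}}(x)/\alpha(N)\to 1$ for $\mu$-a.e.\ $x$ --- is \emph{false}. Hopf's ratio ergodic theorem gives only ratios of Birkhoff sums, never a deterministic normalisation, and Aaronson's theorem (recalled in Section~\ref{sec:intro}) says precisely that no sequence $(d_N)$ can satisfy $S_N \UU_{_E}(x)/d_N\to 1$ a.e.\ when $\mu(X)=\infty$. What survives in this setting is distributional convergence of Darling--Kac type, not almost-sure convergence; the paper itself records in \eqref{fine-2} that the correct a.e.\ asymptotic for $R_{_{E,N}}$ involves the random correction $w(N,E,x)$, not just $\alpha(N)$.

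Second, and more drastically, your claim that $m(N,E,x)=o(N)$ a.e.\ is directly contradicted by Proposition~\ref{rem: max} of this very paper, which proves $\limsup_{N\to\infty} m(N,E,x)/N=\infty$ a.e. The Borel--Cantelli/extreme-value heuristic you sketch cannot work: $\varphi_{_E}$ is \emph{non-integrable} (this is Kac's formula with $\mu(X)=\infty$), so the running maximum of $\varphi_{_E}$ over $\alpha(N)$ induced steps is not $o(N)$; on the contrary, its $\limsup$ divided by $N$ is infinite. The whole philosophy of the construction is the opposite of what you assume: $m(N,E,x)$ is \emph{not} a negligible correction to $N$, it is a large random term whose addition exactly compensates the wild fluctuations that make $R_{_{E,N}}/\alpha(N)$ diverge.

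The actual argument in \cite{no1} proceeds by inversion of a trimmed-sum law for the return times. One first proves (this is \cite[Lemma 4.3]{no1}, quoted in the proof of Proposition~\ref{rem: max}) that the $1$-trimmed Birkhoff sum of $\varphi_{_E}$ under $T_{_E}$ satisfies
\[
\sum_{k=1}^{n}(\varphi_{_E}\circ T_{_E}^{k-1})(x)-\max_{1\le \ell\le n}(\varphi_{_E}\circ T_{_E}^{\ell-1})(x)\ \sim\ \beta(n)\quad\text{a.e.},
\]
with $\beta$ the asymptotic inverse of $\alpha$; this is where the \mixset\ hypothesis and condition \eqref{eq: cond level sets} enter, via Lemma~\ref{lemma-aar-nakada}. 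One then reads this along $n=R_{_{E,N,m}}(x)$ and observes, using \eqref{relations}, that the left-hand side is sandwiched between quantities asymptotic to $N$, so that $\beta(R_{_{E,N,m}}(x))\sim N$, i.e.\ $R_{_{E,N,m}}(x)\sim\alpha(N)$. In short: trim first, then invert --- rather than trying to treat $m(N,E,x)$ as small.
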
 

Moreover, we need a general result about convergence of the trimmed sums of a sequence of random variables. The following is adapted from \cite{aar-nakada} (see also \cite[Appendix A]{no1}).

\begin{lemma}\label{lemma-aar-nakada}
Let $(Y_n)$ be a sequence of non-negative, identically distributed random variables on a probability space $(\Omega,\PP)$ which is $\psi$-mixing with the coefficient $\psi(n)$ fulfilling $\sum_{n\ge 1} \psi(n)/n < \infty$. Let $F$ be the distribution function of the $(Y_n)$, that is $F(y) = \PP (Y_1\le y)$, and let for some $y_0>1$
\begin{align}
 W\coloneqq \inf \left\{r\in\mathbb{N}\colon \int_{y_0}^{\infty}\left(\frac{y\left(1-F(y)\right)}{\int_{y_0}^y\left(1-F(t)\right)\mathrm{d}t}\right)^{r+1} \frac{1}{y}\, \mathrm{d}y<\infty\right\},
\label{eq: cond AN}
\end{align} 
with the standard convention that $W=\infty$ if such an $r$ does not exist. Then there exists a sequence $(b(n))$ such that for $\PP$-a.e. $\omega \in \Omega$
\begin{align*}
 \lim_{N\to \infty}\frac{S_N^W(\omega)}{b(N)}=1.
\end{align*}
If we set $a(y):=y/ \int_{y_0}^y (1-F(t))\mathrm{d}t$, then $b(n)$ can be set as the asymptotic inverse function of $a(n)$.

Furthermore, if we denote by $M^r_N(\omega)$ the $r$-th maximum in $\{Y_1(\omega),\ldots, Y_N(\omega)\}$ then we have
\[
 \lim_{N\to\infty}\frac{M^r_N(\omega)}{b(N)}=0 \quad \text{for $\PP$-a.e. $\omega\in \Omega$,} 
\]
for all $r> W$. (Here, we denote by the $r$-th maximum the maximal element that remains after removing the $r-1$ largest entries.)
\end{lemma}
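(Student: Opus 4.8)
The plan is to reduce the statement, following the scheme of Haeusler--Mason for i.i.d.\ sequences and of Aaronson--Nakada \cite{aar-nakada} (see also \cite[Appendix~A]{no1}) in the mixing case, to two almost sure facts: a law of large numbers for a suitably truncated sum (the ``main term'') and a control of the order statistics that get trimmed away (the ``correction term''). The $\psi$-mixing hypothesis will enter only through a covariance inequality and, crucially, through the fact that $\sum_{n\ge1}\psi(n)/n<\infty$ is exactly what keeps the relevant Borel--Cantelli arguments valid along geometric subsequences and for events depending on widely separated coordinates (if the $(Y_n)$ are genuinely independent, the mixing terms below are simply absent). First I would set $L(y):=\int_{y_0}^y(1-F(t))\,\rd t$, so that $L'=1-F$, $a(y)=y/L(y)$, and, since $Y_1\notin\LL^1$ in the case of interest, $L(y)\to\infty$. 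The integrand in \eqref{eq: cond AN} is then $\bigl(yL'(y)/L(y)\bigr)^{r+1}$, and finiteness for $r=W$ forces $yL'(y)=o(L(y))$, whence $L$ is slowly varying and $1-F$ is regularly varying of index $-1$; thus $a$ is eventually increasing and regularly varying of index $1$, its asymptotic inverse $b$ may be taken non-decreasing with $b(n)\to\infty$, $a(b(n))\sim n$, hence $L(b(n))\sim b(n)/n$ and $n\,\mathbb{E}[Y_1\wedge b(n)]\sim n\,L(b(n))\sim b(n)$. A Karamata-type change of variables $y=b(t)$ turns \eqref{eq: cond AN} into: along $N_j:=\lceil\theta^j\rceil$ (with $\theta>1$ fixed), $\sum_j\bigl(N_j(1-F(b(N_j)))\bigr)^{W+1}<\infty$, while the analogue with exponent $W$ diverges; I would also record $1-F(\lambda y)\asymp\lambda^{-1}(1-F(y))$ and $L(\lambda y)\sim L(y)$ for fixed $\lambda>0$, and $L(c_n)\sim L(b(n))$ for the near-$b(n)$ levels $c_n$ used below.

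\emph{Control of the trimmed order statistics.} For $r<n$ I would use the identity $S_n^r=\sum_{k=1}^n(Y_k\wedge M_n^{r+1})-r\,M_n^{r+1}$, obtained by capping each summand at the $(r+1)$-st largest value. The key claim is $M_n^{W+1}=o(b(n))$ a.s., and more precisely $L(M_n^{W+1})\sim L(b(n))$ a.s.; also $M_n^{r}=o(b(n))$ a.s.\ for every $r>W$. For the upper bound one uses $\PP\bigl(M_n^{W+1}>\lambda\bigr)\le\binom{n}{W+1}(1-F(\lambda))^{W+1}$ for deterministic $\lambda$ (in the $\psi$-mixing case the comparison with this i.i.d.\ binomial bound costs only a multiplicative $\psi$-factor); taking $\lambda=\eps\,b(N_j)$ and invoking the first relation of the previous paragraph gives $\sum_j\PP(M_{N_j}^{W+1}>\eps\,b(N_j))<\infty$, so by Borel--Cantelli, the monotonicity of $n\mapsto M_n^{W+1}$, and the slow variation of $b$ one gets $M_n^{W+1}\le\eps\,b(n)$ eventually a.s. For the lower bound, choose $\lambda=\lambda_n$ at the level where the expected number of exceedances $n(1-F(\lambda_n))$ is of order $\log\log n$ — still $L(\lambda_n)\sim L(b(n))$ — and check, via the Poisson-type estimate for $\PP(M_n^{W+1}\le\lambda_n)$, that $\sum_j\PP(M_{N_j}^{W+1}\le\lambda_{N_j})<\infty$, so $M_n^{W+1}\ge\lambda_n$ eventually a.s.; combined with slow variation of $L$ this gives $L(M_n^{W+1})\sim L(b(n))$. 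Running the same computation with the exponent $W+1$ replaced by any $r+1>W+1$ (the integrand being eventually $\le1$, hence only smaller) yields $M_n^r=o(b(n))$ a.s.\ for every $r>W$, the last assertion of the Lemma; minimality of $W$ in \eqref{eq: cond AN} conversely makes the exponent-$W$ sum diverge, which by the second Borel--Cantelli lemma gives $M_n^{W}\ne o(b(n))$, explaining why $W$ is the right trimming number. The divergence and second-Borel--Cantelli steps are where one must localise the events on coordinate blocks separated by a growing gap, and this is precisely where $\sum_n\psi(n)/n<\infty$ is needed.

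\emph{The truncated sum and assembly.} Next I would prove that for deterministic $(c_n)$ with $c_n\to\infty$, $c_n\le b(n)$ and $L(c_n)\sim L(b(n))$ one has $b(n)^{-1}\sum_{k=1}^n(Y_k\wedge c_n)\to1$ a.s. The mean is $n\,\mathbb{E}[Y_1\wedge c_n]\sim n\,L(c_n)\sim b(n)$. A single Chebyshev bound is not quite enough (the variance of one truncated summand is of order $b(n)^2/n^2$, not smaller), so — as in Haeusler--Mason — I would truncate once more at an intermediate level $\delta_n\to\infty$ with $\delta_n=o(c_n)$ and $L(\delta_n)\sim L(c_n)$: the part $\sum_k(Y_k\wedge\delta_n)$ is a sum of bounded variables whose normalised fluctuations are summable along $(N_j)$ — Chebyshev with the $\psi$-mixing covariance bound $|\mathrm{Cov}(Y_k\wedge\delta_n,Y_l\wedge\delta_n)|\le\psi(l-k)(\mathbb{E}[Y_1\wedge\delta_n])^2$, together with $\sum_j\tfrac1{N_j}\sum_{m\le N_j}\psi(m)=O\bigl(\sum_m\psi(m)/m\bigr)<\infty$ and monotone interpolation — while $\sum_k\bigl((Y_k\wedge c_n)-(Y_k\wedge\delta_n)\bigr)$ has mean $n\,\mathbb{E}\bigl[(Y_1\wedge c_n)-(Y_1\wedge\delta_n)\bigr]=o(b(n))$ by regular variation of $1-F$, and concentrates by the same argument. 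Finally, by the previous paragraph, eventually a.s.\ $\lambda_n\le M_n^{W+1}\le\eps\,b(n)$, with both bounds satisfying $L(\cdot)\sim L(b(n))$; hence, by monotonicity of the truncation in its level and two applications of the truncated law just proved (with $c_n=\lambda_n$ and $c_n=\eps\,b(n)$), $b(n)^{-1}\sum_{k=1}^n(Y_k\wedge M_n^{W+1})\to1$ a.s. Inserting this and $M_n^{W+1}=o(b(n))$ into the identity $S_N^W=\sum_{k\le N}(Y_k\wedge M_N^{W+1})-W\,M_N^{W+1}$ yields $S_N^W/b(N)\to1$ a.s., with $b$ the asymptotic inverse of $a$, as claimed.

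\emph{Expected main obstacle.} The substantive part is the control of the order statistics in the second step: pinning down that \eqref{eq: cond AN} with exponent exactly $W+1$ is what forces $M_n^{W+1}=o(b(n))$ while it fails at level $W$, which requires the Karamata change of variables and Borel--Cantelli in both directions; a secondary difficulty is obtaining the \emph{almost sure} (not merely in-probability) convergence of the self-normalised, heavy-tailed truncated sum, for which a one-step second-moment estimate is too weak and the double truncation is needed. In the genuinely dependent ($\psi$-mixing) setting the extra care throughout is that every Borel--Cantelli step must be applied to events localisable on well-separated coordinate blocks, which is exactly the role of the hypothesis $\sum_{n\ge1}\psi(n)/n<\infty$.
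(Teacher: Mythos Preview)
The paper does not actually prove this lemma: it is stated as a result ``adapted from \cite{aar-nakada} (see also \cite[Appendix~A]{no1})'' and then used as a black box throughout Section~\ref{sec: proof gen thm}. So there is no in-paper argument to compare against; the relevant comparison is with the Aaronson--Nakada proof itself, and your sketch follows that scheme faithfully --- the capping identity $S_N^W=\sum_{k\le N}(Y_k\wedge M_N^{W+1})-W\,M_N^{W+1}$, Borel--Cantelli control of the $(W{+}1)$-st maximum via the integral condition \eqref{eq: cond AN} along geometric subsequences, and a truncated strong law obtained from the $\psi$-mixing covariance bound. Your identification of $\sum_n\psi(n)/n<\infty$ as precisely the hypothesis that makes the variance and second Borel--Cantelli estimates go through is exactly how it is used in \cite{aar-nakada} and \cite[Appendix~A]{no1}.

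One small caution on the packaging of hypotheses: your claim that finiteness of the integral in \eqref{eq: cond AN} \emph{alone} forces $yL'(y)=o(L(y))$, and hence slow variation of $L$, is not quite right --- convergence of $\int_{y_0}^\infty\bigl(yL'(y)/L(y)\bigr)^{W+1}\,dy/y$ does not by itself exclude oscillation of $yL'/L$. In the Aaronson--Nakada framework, slow variation of $L$ (equivalently, regular variation of $1-F$ with index $-1$) is a standing assumption rather than a consequence of \eqref{eq: cond AN}; in the present paper's applications it is supplied externally by assumption~(i) on the wandering rate together with assumption~(ii). This does not affect the soundness of your overall strategy, only where that regularity comes from.
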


Finally, we state a version of the Borel-Cantelli lemma which is applicable to $\psi$-mixing sequences:
\begin{lemma}[{\cite[Lemma 6]{BClemma}}]\label{lemma: BC improved}
 Let $A_n$ be a sequence of events such that $\mathbbm{1}_{A_n}$ is a $\psi$-mixing sequence of random variables. 
 If $\sum_{n=1}^{\infty}\mu(A_n)=\infty$, then $\mu(A_n\text{ i.o.})=1$. 
\end{lemma}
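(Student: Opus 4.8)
The statement is the divergence half of the Borel--Cantelli lemma with independence relaxed to $\psi$-mixing, and the plan is to recover it from the classical Chung--Erd\H{o}s second-moment argument combined with the zero--one law for the tail $\sigma$-field of a $\psi$-mixing sequence. First I would record pairwise quasi-independence: since $\psi(n)\to0$ the sequence $(\psi(n))_{n\ge1}$ is bounded, so $\Psi:=\sup_{n\ge1}\psi(n)<\infty$, and for $m<n$ the inclusions $A_m\in\FF_0^m$, $A_n\in\FF_n^\infty$ together with Definition \ref{def-psi-mixing} give $\mu(A_m\cap A_n)\le(1+\psi(n-m))\mu(A_m)\mu(A_n)\le(1+\Psi)\mu(A_m)\mu(A_n)$.

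Next I would produce a lower bound on tail unions that is uniform in the truncation level. Fixing $N$ and, for $M\ge N$, setting $s_{N,M}:=\sum_{n=N}^{M}\mu(A_n)$ and $X:=\sum_{n=N}^{M}\mathbbm{1}_{A_n}$, the quasi-independence bound yields $\mathbb{E} X^2=\sum_{n,m=N}^{M}\mu(A_n\cap A_m)\le s_{N,M}+(1+\Psi)s_{N,M}^2$, while $\mathbb{E} X=s_{N,M}$. Applying Cauchy--Schwarz to $X=X\,\mathbbm{1}_{\{X>0\}}$ (the Chung--Erd\H{o}s inequality) gives
\[
\mu\Bigl(\bigcup_{n=N}^{M}A_n\Bigr)\ge\frac{(\mathbb{E} X)^2}{\mathbb{E} X^2}\ge\frac{1}{(1+\Psi)+s_{N,M}^{-1}}.
\]
Since $\sum_{n\ge1}\mu(A_n)=\infty$ forces $s_{N,M}\to\infty$ as $M\to\infty$, letting $M\to\infty$ gives $\mu\bigl(\bigcup_{n\ge N}A_n\bigr)\ge(1+\Psi)^{-1}$ for every $N$, and since these sets decrease to $\{A_n\text{ i.o.}\}$ I obtain $\mu(A_n\text{ i.o.})\ge(1+\Psi)^{-1}>0$.

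To upgrade strict positivity to full measure I would invoke the zero--one law: $\{A_n\text{ i.o.}\}$ is a tail event of $(\mathbbm{1}_{A_n})$, i.e.\ it lies in $\TTT:=\bigcap_{k\ge1}\FF_k^\infty$. For $C\in\TTT$ and $B\in\FF_0^j$ one has $C\in\FF_{j+n}^\infty$ for all $n$, so $|\mu(B\cap C)-\mu(B)\mu(C)|\le\psi(n)\mu(B)\mu(C)\to0$; hence $\mu(B\cap C)=\mu(B)\mu(C)$ for all $B$ in the algebra $\bigcup_j\FF_0^j$ and, by a monotone-class argument, for all measurable $B$, in particular $B=C$, giving $\mu(C)=\mu(C)^2$ and $\mu(C)\in\{0,1\}$. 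Applied to $C=\{A_n\text{ i.o.}\}$ and combined with the strict positivity just obtained, this forces $\mu(A_n\text{ i.o.})=1$.

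The main obstacle is conceptual rather than computational: the second-moment estimate only yields a strictly positive constant, not $1$, so the argument genuinely relies on that constant being uniform in $N$, which — together with the $\psi$-mixing zero--one law — promotes positivity to probability one. The one technical point that needs care is the tail-triviality step, namely extending the $\psi$-bound, valid a priori only for cylinder events $B\in\FF_0^j$, to all measurable $B$ and in particular to $B=C$ itself.
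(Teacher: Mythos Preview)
The paper does not supply its own proof of this lemma; it is recorded as a citation to \cite[Lemma~6]{BClemma} (Blum--Hanson--Koopmans) and used as a black box. Your argument is correct and is essentially the classical proof: the Chung--Erd\H{o}s second-moment inequality combined with the pairwise quasi-independence bound $\mu(A_m\cap A_n)\le(1+\Psi)\mu(A_m)\mu(A_n)$ gives $\mu\bigl(\bigcup_{n\ge N}A_n\bigr)\ge(1+\Psi)^{-1}$ uniformly in $N$, and the tail zero--one law for $\psi$-mixing sequences then forces $\mu(A_n\text{ i.o.})=1$. Both steps are carried out cleanly; the only cosmetic point is that in the tail-triviality step you should note explicitly that $C\in\TTT\subset\FF_0^\infty$ so that the monotone-class extension to all $B\in\FF_0^\infty$ actually covers the choice $B=C$.
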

\begin{remark}
In \cite{BClemma} the notion of $\psi$-mixing is referred to as *-mixing.
\end{remark}

We are now ready to give the proof of the main results.

\subsection{Proof of Theorem \ref{thm:main1}} \label{sec:proof-main}
Let $g:X\to \R_{\ge 0}$ be a measurable observable satisfying assumptions (ii) and (iii) and bounded on $X\setminus E$. We may also assume that $\sup_{_{X\setminus E}} g \le \inf_{_E} g$. If this is not the case, we can consider $\tilde g := g + (\sup_{_{X\setminus E}} g)\, \UU_{_E}$ and prove \eqref{eq:main1} for $\tilde g$. Since 
\[
S_{N+m(N,E,x)}\tilde g(x) = S_{N+m(N,E,x)}g(x) + \left(\sup_{X\setminus E} g\right)\, R_{_{E,N,m}}(x),
\]
by Lemma \ref{useful-3} and \eqref{alfa}, which implies $\alpha(N)=o(N)$, we obtain \eqref{eq:main1} for $g$.

The first step is to split the observable $g$ into two summands which can be studied independently. Let $c:= \sup_{_{X\setminus E}} g \le \inf_{_E} g$ and consider
\begin{equation}\label{g-reduced}
g_c(x) := \left\{ \begin{array}{ll} g(x), & \text{if $x\in X\setminus E$;} \\[0.2cm] c, &  \text{if $x\in E$.} \end{array} \right.
\end{equation}
Then $g_c$ is bounded on $X$ and its Birkhoff sums may be studied as in \cite{no1} when the needed assumptions are satisfied. Further, let
\begin{equation}\label{g-f}
f(x) := g(x) - g_c(x),\quad \forall\, x\in X
\end{equation}
then $f$ is a non-negative observable which vanishes on $X\setminus E$ and $f\not\in L^1(E,\mu)$. Its Birkhoff sums may be studied by looking only at the induced map $T_{_E}$, in fact the induced observable $f^E$ defined as in \eqref{induced-obs} satisfies 
\begin{equation}\label{induced-obs-reduced}
f^E(x) = f(x),\quad \forall\, x\in E,
\end{equation}
so that for all $N$ and a.e. $x\in E$
\begin{equation}\label{induced-obs-sums}
\sum_{n=1}^N\, (f\circ T^{n-1})(x) = \sum_{n=1}^{R_{_{E,N}}(x)-1}\, (f^E\circ T_{_E}^{n-1})(x).
\end{equation}
We use the linearity of the Birkhoff sums to write
\begin{equation} \label{splitting}
S_N g(x) = S_N g_c (x) + S_N f(x),
\end{equation}
so that we can study the asymptotic behaviour of the two sums separately. 

Assuming now that $g$ satisfies assumption (iv), we obtain $g_c \equiv c$ on $X$, so that $S_N g_c (x) = c\, N$ for all $x\in X$. Hence (iv) is useful to reduce the proof to the study of the Birkhoff sums of $f$.  

Now, we aim to apply Lemma \ref{lemma-aar-nakada} on the sequence $(f^E\circ T_{_E}^{n-1})$.
By condition (iii), the function $f$ is piecewise constant on the partition $\PPP_{_E}$, and by the assumptions, the set $E$ induces rapid $\psi$-mixing (see Definition \ref{def-set-induce}), therefore the sequence $(f^E\circ T_{_E}^{n-1})$ is $\psi$-mixing with the coefficient $\psi(n)$ fulfilling $\sum_{n\ge 1}\, \psi(n)/n <\infty$. 
Furthermore, we have 
\[
 1-F(n) = \mu(f^E>n)=\mu(f>n). 
\]
So, a straightforward calculation using (i) and (ii) shows that $W=1$ in \eqref{eq: cond AN}. Hence, Lemma \ref{lemma-aar-nakada} is applicable. Furthermore, the asymptotic inverse of the sequence $(b(n))$ of Lemma \ref{lemma-aar-nakada} is
\begin{equation}\label{eq:a-ylog1}
 a(y)=\frac{y}{\int_1^y (1-F(t))\mathrm{d}t}
     \sim \frac{y}{\int_1^y \kappa \mu(A_{>\lfloor t\rfloor}) \mathrm{d}t}
     \sim \frac{y }{\kappa\, w_{\lfloor y\rfloor}(E)}
     \sim \frac{\alpha(y)}{\kappa}.
\end{equation}
Thus, since $R_{E,N,m}(x)\to\infty$ as $N\to \infty$, we obtain that for $\mu$-a.e. $x\in X$ 
\[
\lim_{N\to \infty}\, \frac{\sum_{n=1}^{R_{E,N,m}(x)-1}\, (f^E\circ T_{_E}^{n-1})(x)- \max_{1\leq k\leq R_{E,N,m}(x)-1} (f^E\circ T_{_E}^{k-1})(x)}{b(R_{E,N,m}(x))}\,  = 1.
\]
Since by (i) the wandering rate $w_n(E)$ is slowly varying, we have that $a(y)$ is regularly varying with index $1$. Hence, it's asymptotic inverse function $b$ is also regularly varying with index $1$, see e.g.\ \cite[Prop.~1.5.12]{reg-var-book}. Then by Lemma \ref{useful-3} and \eqref{eq:a-ylog1} we obtain for $\mu$-a.e. $x\in X$ 
\begin{equation} \label{eq: asymp alpha}
b( R_{E,N,m}(x) ) \sim b( \alpha(N))   
\sim b(\kappa\, a(N))\sim \kappa\, N.
\end{equation}
Hence for $\mu$-a.e. $x\in X$ we have
\begin{equation} \label{eq:case-f1}
  \lim_{N\to \infty}\, \frac{\sum_{n=1}^{R_{E,N,m}(x)-1}\, (f^E\circ T_{_E}^{n-1})(x)-\max_{1\leq k\leq R_{E,N,m}(x)-1} (f^E\circ T_{_E}^{k-1})(x)}{N}\,  =  \kappa.
\end{equation}

Let's now consider the function $g$ and its splitting $g=g_c+f$. By the assumptions we have
\begin{align*}
S_{N+m(N,E,x)} g(x) &\, = c\,(N+m(N,E,x))+S_{N+m(N,E,x)} f(x)  \\ &\, = c\,(N+m(N,E,x)) +\sum_{n=1}^{R_{E,N,m}(x)-1}\, (f^E\circ T_{_E}^{n-1})(x),
\end{align*}
and
\begin{align}
\max_{1\leq k\leq R_{E,N,m}(x)-1} (f^E\circ T_{_E}^{k-1})(x) = \max_{1\leq k\leq N+m(N,E,x)} (g\circ T^{k-1})(x) -c.\label{eq: max f g}
\end{align}
Finally, using \eqref{eq:case-f1} we have
\begin{align}
&\, \lim_{N\to \infty}\, \frac{S_{N+m(N,E,x)} g(x) -c\, m(N,E,x) - \max_{1\leq k\leq N+m(N,E,x)} (g\circ T^{k-1})(x) +c}{N}\notag \\
&\, = \lim_{N\to \infty}\, \frac{\sum_{n=1}^{R_{E,N,m}(x)-1}\, (f^E\circ T_{_E}^{n-1})(x) +c\, N - \max_{1\leq k\leq R_{E,N,m}(x)-1} (f^E\circ T_{_E}^{k-1})(x)}{N} = c+\kappa,\label{eq: SN+m=c+kappa}
\end{align}
and \eqref{eq:main1} is proved.\newline 

In the sequel, we prove \eqref{eq:main2} under conditions (i)-(iv) and \eqref{eq:cond1}. As before, it is enough to study the function $f$ defined in \eqref{g-f}. We first show that for a monotonic sequence $( p_N)$ tending to infinity we have that\footnote{We use ``i.m.'' to denote ``infinitely many''.}  
\begin{align}
\mu\left(\bigcup_{k, \ell \in \{1,\ldots, N\}} \left\{f \circ T_{_E}^{k-1}> p_N\right\} \cap \left\{\varphi_{_E} \circ T_{_E}^{\ell-1}>p_N\right\}  \text{ for i.m. }N\in\N\right)=0\label{eq: cor equiv 1}
 \end{align}
holds if
\begin{align}
  \sum_{N=1}^{\infty}\, N\, \mu(g >p_N)^2<\infty.\label{eq: cor equiv 2}
\end{align}

We follow the ideas of \cite[Lemma 1.2]{aar-nakada}. First of all, since $E$ is \mixset, for $k\neq \ell$ we have 
\begin{align*}
 & \mu\left(\left\{f \circ T_{_E}^{k-1}>p_N \right\} \cap \left\{\varphi_{_E} \circ T_{_E}^{\ell-1}>p_N\right\}\right)\\
  &\leq (1+\psi(|k-\ell|))\, \mu\left(f \circ T_{_E}^{k-1}>p_N\right)\, \mu\left( \varphi_{_E} \circ T_{_E}^{\ell-1}>p_N\right)\\
  &\leq \kappa'\, \mu\left(f >p_N\right)^2\,, 
\end{align*}
where we have used (ii) and have $\kappa':=(1+\psi(1))/\kappa$. Using additionally $\mu\left(f >p_N\right)\sim \mu\left(g >p_N\right)$ for $p_N\to\infty$ and \eqref{eq:cond1} there exists $\kappa''$ such that
\begin{align*}
 & \mu\left(\bigcup_{k, \ell \in \{1,\ldots, N\}} \left\{f \circ T_{_E}^{k-1}>p_N\right\} \cap \left\{\varphi_{_E} \circ T_{_E}^{\ell-1}>p_N\right\}\right)\\
 &\qquad\leq \sum_{k=1}^{N} \sum_{\ell=1}^N \mu \left(\left\{f \circ T_{_E}^{k-1}>p_N \right\} \cap \left\{\varphi_{_E} \circ T_{_E}^{\ell-1}>p_N \right\} \right)
 \leq \kappa''\, N^2\, \mu\left(g >p_N\right)^2.
\end{align*}
The remaining proof of the equivalence between \eqref{eq: cor equiv 1} and \eqref{eq: cor equiv 2} follows then in the same way as in the proof of \cite[Lemma 1.2]{aar-nakada} using the methods of the direct part of \cite[Lemma 3]{mori76}.

In the next steps we will show that 
\eqref{eq: cor equiv 2} holds for $p_N=\beta(N)$ with $\beta$ the asymptotic inverse of $\alpha$ as in \eqref{alfa}. First, we note that by (ii),
\begin{align*}
 \sum_{n\geq 1}\frac{n (\mu(A_{>n}))^2}{w_n(E)^2}
 &\asymp \sum_{n\geq 1}\frac{\alpha(n)^2 (\mu(A_{>n}))^2}{n}
 \asymp \int_0^{\infty}\frac{\alpha(t )^2 (\mu(A_{>t}))^2}{t}\,\mathrm{d}t
 \asymp \int_0^{\infty}\frac{\alpha(t )^2 \mu(g>t)^2}{t}\,\mathrm{d}t.
\end{align*}
Here and in the following we slightly abuse notation by setting $\gamma(t)=\gamma(\lfloor t\rfloor)\{t\}+\gamma(\lfloor t\rfloor+1)\,(1-\{t\})$, where $\gamma$ denotes any function in this and the following calculations.
Next, we use substitution with $y(t)=\alpha(t)=t/w_t(E)$. We have 
\begin{align*}
 \frac{\mathrm{d}y(t)}{\mathrm{d}t}=\frac{w_t(E)+t\mu(A_{>t})}{w_t(E)^2}\asymp \frac{1}{w_t(E)},
\end{align*}
since $\mu(A_{>t})t\leq w_t(E)$. 

Then we have
\[
 \sum_{n\geq 1}\frac{n (\mu(A_{>n}))^2}{w_n(E)^2}
 \asymp \int_0^{\infty}\frac{y^2 \mu(g>\beta(y))^2 w_{\beta(y)}(E) }{\beta(y)}\,\mathrm{d}y
 \asymp \int_0^{\infty} y\, \mu(g>\beta(y))^2 \,\mathrm{d}y
\]
and thus \eqref{eq: cond level sets} implies 
\begin{equation}\label{eq: sum with C}
 \sum_{n\geq 1} n\, \mu(g> \beta(n))^2<\infty.
\end{equation}

Moreover, we note that by an analogous argumentation as in \cite[Lemma 1.2]{aar-nakada}, noting that $\beta$ is regularly varying, we have that \eqref{eq: sum with C} implies
$\sum_{n\geq 1} n\, \mu(g>\epsilon \beta(n))^2<\infty$, for all $\epsilon>0$. 

Hence, we can conclude that \eqref{eq: cor equiv 1} holds with $ p_N=\epsilon \beta(N)$ for any $\epsilon>0$. Using that $\beta$ is the asymptotic inverse of $\alpha$ and Lemma \ref{useful-3} this is equivalent to 
 \begin{align}
  \mu\left(\bigcup_{k, \ell \in \{1,\ldots, R_{_{E,N,m}}(x)\}} \left\{ f \circ T_{_E}^{k-1}>  \epsilon N\right\} \cap \left\{\varphi_{_E} \circ T_{_E}^{\ell-1}> \epsilon N\right\}  \text{ for i.m. }N\in\N\right)=0.\label{eq: cor last step}
 \end{align}
Here, we have additionally used that $\beta$ as the asymptotic inverse of a regularly varying function has to be regulaly varying itself. 
 
Then we notice that $m(N,E,x) \leq  \max\set{(\varphi_{_E}\circ T_{_E}^{k-1})(x)\, :\, k=1,\dots, R_{_{E,N,m}}(x)}$ holds
by \eqref{relations}. Combining this with \eqref{eq: cor last step}, \eqref{eq: SN+m=c+kappa} and the fact that $f=f^E$ we obtain \eqref{eq:main2}.\newline
 
Finally, we give the proof of \eqref{eq-cfr}. By \eqref{g-f} and \eqref{eq: max f g} we have 
 \begin{align}
  \max_{1\leq k\leq N+m(N,E,x)} (g\circ T^{k-1})(x)
  =\max_{1\leq k\leq R_{E,N,m}(x)-1} (f^E\circ T_{_E}^{k-1})(x) +c.\label{eq:max g max fE}
 \end{align}

By Aaronson's theorem, \cite{aa-book}, the fact that $f^E$ is non-integrable, the fact that $R_{E,N,m}(x)$ is an increasing function with $0$ or $1$ increments, and \eqref{eq: asymp alpha} we have 
\begin{align*}
 \limsup_{N\to\infty}\frac{ \sum_{k=1}^{N} (f^E\circ T_{_E}^{k-1})(x)}{b(N)}
  \,=\,\limsup_{N\to\infty}\frac{ \sum_{k=1}^{R_{E,N,m}(x)-1} (f^E\circ T_{_E}^{k-1})(x)}{N}\in \{0,\infty\}.
\end{align*}
However, by \eqref{eq:case-f1}, it is clear that the above limit has to equal $\infty$ and we obtain using \eqref{eq:max g max fE} that
\[
\limsup_{N\to\infty}\frac{ \max_{1\leq k\leq N+m(N,E,x)} (g\circ T^{k-1})(x)}{N}
=\limsup_{N\to\infty}\frac{ \max_{1\leq k\leq R_{E,N,m}(x)-1} (f^E\circ T_{_E}^{k-1})(x)}{N}
= \infty.
\] 

On the other hand, we have  
\begin{align*}
 m(N, E, x) = \max \left\{(\varphi_{_E} \circ T_{_E}^{k-1})(x) : k = 1, \ldots , R_{E,N}(x)\right\}, 
\end{align*}
see \cite[p. 5558]{no1}.
Furthermore, from \cite[Lemma 4.3]{no1} we have for $\mu$-a.e. $x\in X$ that
 \begin{align*}
  \lim_{N\to\infty}\frac{\sum_{k=1}^{N}(\varphi_{_E}\circ T_{_E}^{k-1})(x)-\max_{1\leq \ell \leq N} (\varphi_{_E}\circ T_{_E}^{\ell-1})(x) }{\beta(N)}=1,
 \end{align*}
where $\beta$ is the asymptotic inverse of $\alpha$ as in \eqref{alfa}.  
Hence, Aaronson's theorem, \cite{aa-book}, and the fact that $\varphi_E$ is non-integrable implies for $\mu$-a.e.\ $x\in X$ that
\begin{align*}
 \limsup_{N\to\infty}\frac{\max_{1\leq \ell \leq N} \varphi_{_E}\circ T_{_E}^{\ell-1}(x) }{\beta(N)}
 =\limsup_{N\to\infty}\frac{\varphi_{_E}\circ T_{_E}^{N-1}(x) }{\beta(N)}
 =\infty.
\end{align*}
Moreover, since $R_{E,N}$ is an increasing function with increments at most $1$ and using \cite[Proof of Thm.~2.4]{no1} it satisfies $\liminf_{N\to \infty} d(R_{E,N}(x))/N\geq 1$ for $\mu$-a.e. $x\in X$, we also obtain 
\begin{align*}
 \limsup_{N\to\infty} \frac{m(N, E, x)}{N}
 \geq \limsup_{N\to\infty}\frac{\varphi_{_E}\circ T_{_E}^{R_{E,N}(x)-1}(x)}{N}=\infty. 
\end{align*}
This concludes the proof of Theorem \ref{thm:main1}.

\subsection{Proofs for the theorems with flattened or increased observables}

\begin{proof}[Proof of Theorem \ref{thm:h-slow}]
We argue as in the proof of Theorem \ref{thm:main1}. First, we write $g(x)=c+f(x)$ with $f$ non-negative, $f\not\in L^1(E,\mu)$, and vanishing on $X\setminus E$. Therefore, $S_N g = cN + S_N f$ and as in \eqref{induced-obs-reduced}-\eqref{induced-obs-sums}
\[
S_N f(x) = \sum_{n=1}^{R_{_{E,N}}(x)-1}\, (f \circ T_{_E}^{n-1})(x).
\]
Let us assume (i), (iii), and (iv) of Theorem \ref{thm:main1}, and (iia). Then, we apply Lemma \ref{lemma-aar-nakada} to the sequence $(f^E \circ T_{_E}^{n-1})$ with
\[
1- F(n) = \mu(f > n) \sim \mu(g>n) = o(\mu(A_{>n})).
\]
It follows that there exists a sequence $(b(n))$ with asymptotic inverse $a(y):=y/ \int_{y_0}^y (1-F(t))\mathrm{d}t$ such that for $\mu$-a.e. $x$
\[
\lim_{N\to \infty}\, \frac{\sum_{n=1}^{R_{_{E,N}}(x)-1}\, (f \circ T_{_E}^{n-1})(x) - \max_{1\le k\le R_{_{E,N}}(x)-1}\, (f \circ T_{_E}^{k-1})(x)}{b(R_{_{E,N}}(x)-1)} = 1,
\] 
and the sequence $(\alpha(n))$ defined in \eqref{alfa} satisfies $\alpha(n)=o(a(n))$.

We now recall from \cite[eq. (34)]{no1} that given for $x\in E$ the quantity
\[
w(N,E,x) := \max \set{ k\ge 1\, :\, \exists\, \ell \in \{1,\dots,N-k\} \text{ s.t. } T^{\ell+j}(x) \not\in E,\, \forall\, j=0,\dots,k-1}. 
\]
which measures the length of an excursion which does not return to $E$ up to time $N$, the following asymptotic behaviour is true
\begin{equation}\label{fine-2}
\lim_{N\to \infty}\, \frac{R_{_{E,N}}(x)}{\alpha(N-w(N,E,x))} = 1.
\end{equation}
Therefore, we obtain
\[
b(R_{_{E,N}}(x)) \sim b(\alpha(N-w(N,E,x))) = o(N).
\]

Finally, since
\[
\max_{1\leq k\leq R_{E,N}(x)-1} (f \circ T_{_E}^{k-1})(x) = \max_{1\leq k\leq N} (g\circ T^{k-1})(x) -c,
\]
we have
\begin{align*}
&\, \lim_{N\to \infty}\, \frac{S_{N} g(x) - \max_{1\leq k\leq N} ( g\circ T^{k-1})(x) +c}{N} \\
&\, = \lim_{N\to \infty}\, \frac{\sum_{n=1}^{R_{E,N}(x)-1}\, ( f\circ T_{_E}^{n-1})(x) +c\, (N-1) - \max_{1\le k\le R_{_{E,N}}(x)}\, ( f \circ T_{_E}^{k-1})(x)}{N} = c,
\end{align*}
and \eqref{slow-first-r} is proved.

For the second part of the proof, we need the following lemma:
\begin{lemma}[Borel-Bernstein type lemma]\label{lemma: Borel-Bernstein}
Let $T : X \to X$ be a conservative, ergodic, measure-preserving transformation of
the measure space $(X, \mathcal{B}, \mu)$ with $\mu$ a $\sigma$-finite measure with $\mu(X) = \infty$, and let $E \in\mathcal{B}$ with $\mu(E) = 1$ be a set which induces rapid $\psi$-mixing.

For $\ell:E\to\mathbb{R}$ such that $\ell$ is constant on each of the sets of the partition $\mathcal{P}_{_E}$ and for a sequence $( \delta_n)$, we have 
\[
\mu(\ell\circ T_{_E}^{n-1}>\delta_n\ \text{for i.m. } n\in\N)= \left\{
\begin{array}{ll} 0, & \text{if }\sum_{n=1}^{\infty}\mu(\ell>\delta_n)<\infty;\\ 1,&\text{else.} \end{array} \right.
\]
\end{lemma}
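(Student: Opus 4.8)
The plan is to reduce this Borel--Bernstein type statement to the $\psi$-mixing Borel--Cantelli lemma (Lemma~\ref{lemma: BC improved}) together with the ordinary Borel--Cantelli lemma. The starting observation is that, since $\ell$ is constant on each element of the partition $\mathcal{P}_{_E}$, the sequence of functions $(\ell \circ T_{_E}^{n-1})_{n\ge 1}$ is of exactly the form covered by Definition~\ref{def-set-induce}: take $\phi_n \equiv \ell$ for all $n$. Hence the indicator sequence $(\UU_{\{\ell \circ T_{_E}^{n-1} > \delta_n\}})_{n\ge 1}$ is a $\psi$-mixing sequence with coefficient satisfying $\sum_n \psi(n)/n < \infty$; in particular $\psi(n)\to 0$, so it is $\psi$-mixing in the sense required by Lemma~\ref{lemma: BC improved}. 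Note also that $\mu(\ell \circ T_{_E}^{n-1} > \delta_n) = \mu(\ell > \delta_n)$ for every $n$, because $T_{_E}$ is measure-preserving on the probability space $(E,\BB|_E,\mu)$.

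First I would treat the convergent case. If $\sum_{n\ge 1}\mu(\ell > \delta_n) < \infty$, then $\sum_{n\ge 1}\mu(\ell\circ T_{_E}^{n-1} > \delta_n) < \infty$, and the classical (first) Borel--Cantelli lemma, which needs no independence or mixing hypothesis, immediately gives $\mu(\ell\circ T_{_E}^{n-1} > \delta_n \text{ for i.m.\ } n) = 0$. Then I would treat the divergent case: if $\sum_{n\ge 1}\mu(\ell > \delta_n) = \infty$, then $\sum_{n\ge 1}\mu(\ell\circ T_{_E}^{n-1} > \delta_n) = \infty$, and applying Lemma~\ref{lemma: BC improved} to the events $A_n := \{\ell\circ T_{_E}^{n-1} > \delta_n\}$ — whose indicators form a $\psi$-mixing sequence by the preceding paragraph — yields $\mu(A_n \text{ i.o.}) = 1$. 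Combining the two cases gives the claimed dichotomy.

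The only point requiring a little care, and the one I would flag as the main (mild) obstacle, is the verification that the indicator sequence $(\UU_{A_n})$ really falls under Definition~\ref{def-set-induce}: the definition is phrased for a sequence $(\phi_n)$ of piecewise-constant functions on $\mathcal{P}_{_E}$ and asserts $\psi$-mixing of $(\phi_n\circ T_{_E}^{n-1})$, whereas what we need is $\psi$-mixing of $(\UU_{A_n})$ where $A_n = \{\phi_n\circ T_{_E}^{n-1} > \delta_n\}$ with $\phi_n \equiv \ell$. One resolves this by taking instead $\phi_n := \UU_{\{\ell > \delta_n\}}$, which is again constant on each element of $\mathcal{P}_{_E}$ (it is a fixed level set of $\ell$, hence a union of partition elements up to null sets), so $(\phi_n \circ T_{_E}^{n-1}) = (\UU_{A_n})$ is a $\psi$-mixing sequence with $\sum_n \psi(n)/n < \infty$ directly from Definition~\ref{def-set-induce}. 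With this choice the hypotheses of Lemma~\ref{lemma: BC improved} are met verbatim, and everything else is routine.
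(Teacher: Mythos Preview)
Your proof is correct and follows essentially the same approach as the paper: reduce to the $\psi$-mixing Borel--Cantelli lemma (Lemma~\ref{lemma: BC improved}) for the divergent case and the classical first Borel--Cantelli lemma for the convergent case, using $T_{_E}$-invariance to identify $\mu(\ell\circ T_{_E}^{n-1}>\delta_n)$ with $\mu(\ell>\delta_n)$. Your extra care in choosing $\phi_n=\UU_{\{\ell>\delta_n\}}$ to make the $\psi$-mixing hypothesis on the indicators explicit is a welcome clarification that the paper leaves implicit.
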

 
\noindent \emph{Proof of Lemma \ref{lemma: Borel-Bernstein}.}
Since $E$ induces rapid $\psi$-mixing and $\ell$ is constant on each set in $\mathcal{P}_{_E}$, we see that the  Borel-Cantelli Lemma \ref{lemma: BC improved} applies
and we can show that 
\begin{equation} \label{Eq_RBCLa}
\mu(\ell\circ T_{_E}^{n-1}>\delta_n\ \text{for i.m. } n\in\N)=1 \quad \text{iff} \quad \sum_{n\geq1}\mu(\ell\circ T_{_E}^{n-1}>\delta_n)=\infty.
\end{equation}

Since $\mu$ is $T_{_E}$-invariant we have $\mu(\ell\circ T_{_E}^{n-1}>\delta_n)=\mu(\ell>\delta_n)$, so that divergence of the right-hand series in \eqref{Eq_RBCLa} is equivalent to that of $\sum_{n\geq1}\mu(\ell>\delta_n)$. 
\qed

\vskip 0.3cm
To prove \eqref{slow-second-r} it remains to show that using (ii\~{a}) instead of (iia), we have
\[
\max_{1\le k\le N}\, (g \circ T^{k-1})(x) = o(N),\quad \text{for $\mu$-a.e. $x$.}
\] 
We note that for a.e.\ $x\in X$ we have by \eqref{fine-2}
\begin{align*}
\max_{1\le k\le N}\, (g \circ T^{k-1})(x)
 = \max_{1\leq k\leq R_{E,N}(x)-1} (f \circ T_{_E}^{k-1})(x)
 \leq \max_{1\leq k\leq \alpha(N)} (f \circ T_{_E}^{k-1})(x)
\end{align*}
Moreover, we have 
\begin{align*}
 \MoveEqLeft\mu\left(\max_{1\leq k\leq \alpha(N)} (f \circ T_{_E}^{k-1})(x)>\epsilon N \text{ for i.m. }N\in\N \right)\\
 &\leq \mu\left(\max_{1\leq k\leq N} (f \circ T_{_E}^{k-1})(x)>\frac{\epsilon}{2} \beta(N) \text{ for i.m. }N\in\N \right).
\end{align*}
To show that the above term equals $0$ for any $\epsilon>0$ we will make use of Lemma \ref{lemma: Borel-Bernstein} with $\ell=f$ and $\delta_n=\tilde{\epsilon}\beta(n)=(\epsilon/2) \beta(n)$. In this case we have by (ii\~{a}) that $\sum_{n=1}^{\infty} \mu(f >\tilde{\epsilon} \beta(n)) <\infty$, hence by Lemma \ref{lemma: Borel-Bernstein} we have $\mu(f\circ T_{_E}^{n-1} >\tilde{\epsilon} \beta(n) \text{ i.o.})=0$. Since $\delta_n=\tilde{\eps} \beta(n)$ is monotonically increasing, we can conclude that $\mu(\max_{1\le k\le n} f\circ T_{_E}^{k-1} >\eps \beta(n)\ \text{ i.o.})=0$ giving the statement of the theorem.
\end{proof}

\begin{proof}[Proof of Theorem \ref{thm:h-fast}]
Arguing as in the proof of Theorem \ref{thm:h-slow}, we find
\[
S_{N+m(N,E,x)} g(x) = c\Big(N+m(N,E,x)\Big) + \sum_{n=1}^{R_{_{E,N,m}}(x)-1}\, (f \circ T_{_E}^{n-1})(x).
\]
Then, we apply Lemma \ref{lemma-aar-nakada} to the sequence $(f \circ T_{_E}^{n-1})$ with $1- F(n) = \mu(f>n)$.
Here, we notice that $\mu(f>n)\sim \mu(g>n)$ and thus, $W$ in \eqref{eq: cond AN} does not change if we calculate it for $F$ being defined as $F(n)=1-\mu(f>n)$ or as $F(n)=1-\mu(g>n)$. We obtain that for $\mu$-a.e. $x$
\[
\lim_{N\to \infty}\, \frac{S^{W}_{R_{_{E,N,m}}(x)-1} f(x)}{b(R_{_{E,N,m}}(x)-1)} =1,
\]
where
\[
S_{R_{_{E,N,m}}(x)-1} f(x) := \sum_{n=1}^{R_{_{E,N,m}}(x)-1}\, ( f \circ T_{_E}^{n-1})(x),
\]
and $S^{W}_{R_{_{E,N,m}}(x)-1}$ is obtained from $S_{R_{_{E,N,m}}(x)-1}$ by trimming the largest $W$ entries (see \eqref{trimmed-birk-sum}). In addition, \eqref{fine-1} implies
\[
\lim_{N\to \infty}\, \frac{S^{W}_{R_{_{E,N,m}}(x)-1} f(x)}{b(\alpha(N))} =1.
\]
We conclude as before by writing
\[
S^{W}_{N+m(N,E,x)} g(x) - c\, m(N,E,x) = c\, N + S^{W}_{R_{_{E,N,m}}(x)-1} f (x) +c\, W_h
\]
and using that 
\[
\frac{b(\alpha(N))}{N} \to \infty
\]
since $a(n) = o(\alpha(N))$.
\end{proof}

\section{Proofs of the theorems for non-regular continued fraction expansions}\label{sec: proofs cf}
 
Given $T:X\to X$ a conservative ergodic measure-preserving transformation of the measure space $(X,\BB,\mu)$ and $E\in \BB$ a set with $\mu(E)=1$, we recall the \emph{hitting time function}
\[
h_{_E} : X \to \N \cup \{0\},\quad h_{_E}(x) := \inf \set{ k\ge 0\, :\, T^k(x) \in E}.
\]
Analogously to the level sets $A_n$ for $\varphi_E$ in \eqref{level-sets} we define a second family of level sets corresponding to $h_E$ given by
\[
E_n := \{ x\in X\, :\, h_{_E}(x) = n\}
\]
with $E_0=E$, for which $X = \bigsqcup_{k\ge 0}\, E_k$ up to zero measure sets. As a connection between these two we recall that $\mu(A_{>n}) = \mu(E_n)$ for all $n\ge 0$ (see \cite[Lemma 1]{zwei-notes}).

\subsection{Proofs for the Backward Continued Fraction transformation}
Given the Backward Continued Fraction transformation $T_{BCF}:[0,1]\to [0,1]$ as in Example \ref{ex:bcf}, we consider the set $E=[1/2,1]$ and the function $g:[0,1]\to \N$ defined in \eqref{observ-bcf}. We have previously observed that $g(T^{j-1}_{BCF}(x)) = d_j(x)$, the $j$-th coefficient in the backward continued fraction expansion \eqref{coefficients-BCF} of $x\in [0,1]$, hence
\[
\sum_{j=1}^{N+m(N,E,x)}\, d_j(x) = S_{N+m(N,E,x)} g(x),\quad \text{for a.e. $x\in [0,1]$.}
\]
To prove Theorem \ref{cor:BCF}, we will apply Theorem \ref{thm:main1} to the function $g$. We start with two auxiliary lemmas. First, we notice that the induced transformation on $E$ has countably many full slopes on each of the intervals $I_n=((n-1)/n,n/(n+1))$ with $n\ge 2$, see e.g.\ \cite[Lemma 8]{zwei}. Therefore, the partition $\PPP_{_E}$ induced by $T_{_E}$ is given by the sets $\{A_k \cap I_h\}$ with $k\ge 1$ and $h\ge 2$.

\begin{lemma}\label{lemma: psi mixing partition}
The set $E=[1/2,1]$ induces \mixset\ for the Backward Continued Fraction transformation $T_{BCF}$. 
\end{lemma}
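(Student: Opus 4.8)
The plan is to verify the definition of ``rapid $\psi$-mixing'' (Definition~\ref{def-set-induce}) directly for $E=[1/2,1]$, $T=T_{BCF}$. The key structural fact is the one just recalled: the induced map $T_{_E}$ has countably many full branches, one on each subinterval of each $I_h$ ($h\ge 2$) corresponding to a return-time level set $A_k$; equivalently $T_{_E}$ restricted to each atom $P\in\PPP_{_E}$ is a bijection onto $E$ modulo null sets, and $T_{_E}$ is (a version of) a Gauss-like map with a uniformly expanding, piecewise $C^2$ structure and bounded distortion. First I would record that $T_{_E}$ is conjugate (via $x\mapsto 1-x$, which swaps $[0,1/2]$ and $[1/2,1]$) to the first-return map of the Gauss map $G(x)=\{1/x\}$ to $[0,1/2]$; this is a standard full-branch Gauss-type map whose transfer operator with respect to the normalized invariant density $\tfrac{1}{\log 2}\cdot\tfrac1x$ has a spectral gap on the space of functions of bounded variation (or on a suitable Hölder/BV space), by the classical Lasota–Yorke / Rychlik machinery for piecewise expanding full-branch maps with bounded distortion. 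This yields exponential decay of correlations, and in fact the $\psi$-mixing coefficient $\psi(n)$ decays exponentially, for observables that are piecewise constant (hence of bounded variation) on $\PPP_{_E}$.

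The main steps, in order: (1) State precisely that $T_{_E}$ is a full-branch piecewise expanding Markov map with respect to the partition $\PPP_{_E}=\{A_k\cap I_h: k\ge 1,\,h\ge 2\}$, with bounded distortion; here I would invoke \cite[Lemma~8]{zwei} for the full-branch property and a standard distortion estimate for Möbius branches. (2) Deduce that the normalized invariant measure $\mu|_E$ is equivalent to Lebesgue with density bounded above and below, and that the associated Perron–Frobenius operator acts quasi-compactly on $\mathrm{BV}(E)$ with a spectral gap; standard references are Rychlik's theorem or the treatment in \cite[Ch.~4-5]{ik-book}. (3) Translate the spectral gap into the $\psi$-mixing statement: for $B\in\FF_0^j$ and $C\in\FF_{j+n}^\infty$ generated by the sequence $(\phi_m\circ T_{_E}^{m-1})$ with each $\phi_m$ piecewise constant on $\PPP_{_E}$, one has $B$ a union of $j$-cylinders (so $\UU_B$ has controlled variation relative to its measure) and $C=T_{_E}^{-(j+n)}C'$ for some $C'$; then $\mu(B\cap C)=\int \UU_B\cdot(\UU_{C'}\circ T_{_E}^{n})\,\rd\mu$ and applying the transfer operator $n$ times together with the spectral-gap decomposition gives $|\mu(B\cap C)/(\mu(B)\mu(C))-1|\le K\theta^n$ for some $\theta\in(0,1)$ and $K$ independent of $B,C,j$. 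This is exactly the classical derivation of $\psi$-mixing from a spectral gap for full-branch maps (Philipp, Bradley); the uniformity in $j$ is what requires the bounded-distortion / full-branch hypotheses rather than just decay of correlations. (4) Since $\psi(n)\le K\theta^n$, we get $\sum_n \psi(n)/n<\infty$ trivially, which is the condition in Definition~\ref{def-set-induce}. I would also note $T_{_E}(A_n)=E$ a.s.\ for each $n$, which holds because on $A_n\cap I_h$ the branch is already onto $E$, so a fortiori the union over $h$ of these branches covers $E$ — this is the ``$T_{_E}(A_n)=E$ a.s.'' requirement in the definition.

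The step I expect to be the main obstacle is (3), making the passage from ``spectral gap / exponential decay of correlations'' to the genuine $\psi$-mixing coefficient bound, because $\psi$-mixing is a strong notion (ratio of probabilities, supremum over all events in the two $\sigma$-algebras, uniform in the gap position $j$), strictly stronger than $\alpha$- or $\phi$-mixing. The resolution is to use the Markov/full-branch structure essentially: every event in $\FF_0^j$ is, up to null sets, a countable union of cylinder sets $[P_1,\dots,P_j]$ on which the density of the pushed-forward measure is comparable to a constant (bounded distortion), so the transfer operator applied to $\UU_B$ stays in a fixed cone/ball of $\mathrm{BV}$ of size $O(\mu(B))$, and the spectral gap then contracts it towards $\mu(B)\cdot\mathbf 1$ at rate $\theta^n$ uniformly; dividing by $\mu(B)\mu(C)$ gives the ratio bound. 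A clean alternative, if one wants to avoid re-deriving this, is to cite an existing result that full-branch (Gauss-type) maps with bounded distortion are $\psi$-mixing with exponential rate — e.g.\ the classical results going back to Philipp or the exposition in \cite{bradley} — and simply check that $T_{_E}$ meets those hypotheses. Either way, once $\psi(n)$ decays exponentially the summability condition is immediate and the lemma follows.

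\begin{remark}
One could alternatively obtain $\psi$-mixing for $T_{_E}$ by transferring the known mixing properties of the Gauss map: since $T_{_E}$ (after the reflection $x\mapsto 1-x$) is the first-return map of $G$ to $[0,1/2]$, and first-return maps of $\psi$-mixing systems to positive-measure sets inherit $\psi$-mixing with at least as fast a rate, one reduces to the classical fact that the Gauss map, with partition into rank-$n$ cylinders, is exponentially $\psi$-mixing.
\end{remark}
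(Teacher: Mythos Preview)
Your main argument (steps (1)--(4)) is correct and is essentially the same route as the paper's, just unpacked: the paper's proof is a two-line citation, verifying Adler's bounded-distortion condition $\sup_n \|T_{BCF}''/(T_{BCF}')^2\|_{I_n}<\infty$ for $T_{BCF}$ and then invoking \cite[Chap.~4]{aa-book}, which contains precisely the machinery (bounded distortion $\Rightarrow$ spectral gap for the induced transfer operator $\Rightarrow$ exponential $\psi$-mixing for cylinder events) that you spell out. So on the substance you and the paper agree; your version is simply more explicit about what the cited chapter delivers.

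One point you should correct, however: the conjugacy claim is wrong. The relation between the two maps is $T_{BCF}(x)=G(1-x)$, i.e.\ $T_{BCF}=G\circ\phi$ with $\phi(x)=1-x$; this is not a conjugacy ($\phi\circ T_{BCF}\neq G\circ\phi$), and indeed $T_{BCF}$ and $G$ cannot be conjugate as measure-preserving systems since one has an infinite and the other a finite absolutely continuous invariant measure. At the level of induced maps the mismatch persists: pushing $\mu|_E$ forward under $\phi$ gives density proportional to $1/(1-y)$ on $[0,1/2]$, not the Gauss density $1/(1+y)$, so $T_{_E}$ and the first-return map of $G$ to $[0,1/2]$ are not conjugate via $\phi$ with their natural invariant measures. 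This does not affect your main argument (1)--(4), which stands on its own, but the alternative route in your final Remark (transfer $\psi$-mixing from the Gauss map through the conjugacy) should be dropped or replaced by a correct relationship.
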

\begin{proof}
It is enough to apply the results in \cite[Chap. 4]{aa-book} since the transformation $T_{BCF}$ is a $C^2$-Markov interval map with bounded distortion, namely $\sup_n\, (\sup_{I_n} |T''/(T')^2|)$ is finite. 
\end{proof}

\begin{lemma} \label{lem:aux1}
Let $E=[1/2,1]$ and $\varphi_{_E}$ the first return time function to $E$ for $T_{BCF}$. For $M,N\in\mathbb{N}$ we have
\begin{align*}
  \mu\left(\left\{g >N\right\} \cap \left\{\varphi_{_E} >M\right\}  \right)
  \asymp \mu\left(g >N\right) \mu \left(\varphi_{_E} >M \right),
\end{align*}
where $\mu$ is the $T_{BCF}$-invariant measure with density $1/(x \log 2)$.
\end{lemma}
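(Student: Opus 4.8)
The plan is to exploit the explicit Markov structure of $T_{BCF}$ on $E=[1/2,1]$. Each interval $I_n=((n-1)/n,n/(n+1))$, $n\ge 2$, is mapped bijectively onto $(0,1)$ by the single branch $x\mapsto 1/(1-x)-n$ of $T_{BCF}$, and $g$ is constant on $I_n$ with value $n+1$. Hence, for $N\ge 2$, $\{g>N\}=\bigcup_{n\ge N}I_n$ up to a null set, and since $\{I_n\}_{n\ge2}$ partitions $E$, everything reduces to estimating $\mu(I_n\cap\{\varphi_{_E}>M\})$ uniformly in $n$ and $M$.

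First I would set up the change of variables on a single branch. The inverse branch of $T_{BCF}|_{I_n}$ is the increasing M\"obius map $\phi_n(y)=1-\frac{1}{y+n}=\frac{y+n-1}{y+n}$, with $\phi_n'(y)=\frac{1}{(y+n)^2}$ and $\phi_n((0,1))=I_n$. Therefore, for any measurable $B\subseteq(0,1)$,
\[
\mu\bigl(I_n\cap T_{BCF}^{-1}(B)\bigr)=\frac{1}{\log 2}\int_B\frac{\phi_n'(y)}{\phi_n(y)}\,\mathrm{d}y=\frac{1}{\log 2}\int_B\frac{\mathrm{d}y}{(y+n-1)(y+n)}.
\]
For $y\in(0,1)$ and $n\ge2$ one has $(y+n-1)(y+n)\in[n(n-1),n(n+1))$, so the integrand lies between $\frac{2}{3n^2}$ and $\frac{2}{n^2}$; consequently
\[
\mu\bigl(I_n\cap T_{BCF}^{-1}(B)\bigr)\asymp\frac{|B|}{n^{2}}
\]
with constants independent of $n\ge2$ and of $B$, where $|\cdot|$ denotes Lebesgue measure.

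Next I would express $\{\varphi_{_E}>M\}$ in a form to which this applies. Since $\varphi_{_E}(x)=1+h_{_E}(T_{BCF}(x))$ for $x\in E$, a point $x\in I_n$ satisfies $\varphi_{_E}(x)>M$ exactly when $T_{BCF}(x)\in E_{\ge M}:=\bigcup_{k\ge M}E_k=\{h_{_E}\ge M\}$, and for $M\ge1$ we have $E_{\ge M}\subseteq X\setminus E=[0,1/2)\subseteq(0,1)$ (in fact a short computation of the excursions gives $E_{\ge M}=(0,1/(M+1))$, so $|E_{\ge M}|=1/(M+1)>0$). Taking $B=(0,1)$ in the branch estimate gives $\mu(I_n)\asymp n^{-2}$, whence $\mu(g>N)=\sum_{n\ge N}\mu(I_n)\asymp N^{-1}$; taking $B=E_{\ge M}$ gives $\mu(I_n\cap A_{>M})\asymp|E_{\ge M}|\,n^{-2}$, and summing over $n\ge2$ yields $\mu(\varphi_{_E}>M)=\mu(A_{>M})\asymp|E_{\ge M}|$. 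Summing instead over $n\ge N$,
\[
\mu\bigl(\{g>N\}\cap\{\varphi_{_E}>M\}\bigr)=\sum_{n\ge N}\mu(I_n\cap A_{>M})\asymp|E_{\ge M}|\sum_{n\ge N}\frac{1}{n^{2}}\asymp\frac{|E_{\ge M}|}{N}\asymp\mu(g>N)\,\mu(\varphi_{_E}>M),
\]
which is exactly the asserted comparison. The computation is essentially bookkeeping once the branch structure of $T_{BCF}$ on the $I_n$ is in place; the one point that must be handled carefully is the uniformity of the constants in the branch estimate over both $n$ and the target set $B$ (equivalently, over $n$ and $M$). Granting that, the remainder is only the elementary estimate $\sum_{n\ge N}n^{-2}\asymp N^{-1}$, so I would not expect a genuine obstacle.
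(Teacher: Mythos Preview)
Your argument is correct. Both you and the paper decompose $\{g>N\}$ into the cylinders $I_n$, $n\ge N$, and estimate $\mu(I_n\cap A_{>M})$ separately; the difference lies in how that estimate is obtained. The paper writes out $A_{>M}$ explicitly as the union over $m\ge 2$ of the intervals $\bigl(1-\tfrac{1}{m},\,1-\tfrac{M+1}{m(M+1)+1}\bigr)$, intersects with $\{g>N\}$, and computes Lebesgue measures directly, using that $\mu$ and Lebesgue measure are comparable on $E$. You instead observe $\varphi_{_E}=1+h_{_E}\circ T_{BCF}$, so that $I_n\cap A_{>M}=I_n\cap T_{BCF}^{-1}(E_{\ge M})$, and then apply a bounded-distortion change of variables on the single branch $\phi_n$ to get $\mu(I_n\cap T_{BCF}^{-1}(B))\asymp |B|/n^{2}$ uniformly in $n$ and $B$. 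This avoids the explicit interval arithmetic for $A_{>M}$ and is more portable: the same reasoning would go through for any full-branch Markov map with uniformly bounded distortion on the inducing cylinders, whereas the paper's computation is specific to $T_{BCF}$. The paper's route, on the other hand, gives concrete formulae for $A_n$ and $A_{>M}$ that make the structure of the return-time level sets visible. Both arrive at $\mu(\{g>N\}\cap\{\varphi_{_E}>M\})\asymp 1/(MN)\asymp\mu(g>N)\,\mu(\varphi_{_E}>M)$ for large $M,N$, which is all that the $\asymp$ notation requires.
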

\begin{proof}
First, the level sets of the hitting time functions $h_{_E}$ are $E_n= (1/(n+2),1/(n+1))$. Thus 
\begin{equation}\label{eq: En}
 \mu(A_{>n}) =  \mu(E_n) = \frac{1}{\log 2}\, \log \left( \frac{n+2}{n+1} \right) = \frac{1}{\log 2}\,\log\left(1+\frac{1}{n+1}\right)\sim \frac{1}{n\log 2}.
\end{equation}
Moreover, since $T_{BCF}$ is piecewise monotone and with full branches with respect to the partition $\{I_m\}$ of $[0,1]$ and $E=\cup_{m\ge 2} I_m$ up to zero-measure sets, one can easily calculate that the level sets of $\varphi_{_E}$ are given by
 \[
 A_n= T_{BCF}^{-1}(E_{n-1})\cap E = \bigcup_{m=2}^{\infty}\left(1-\frac{n+1}{mn+m+1}\, ,\, 1-\frac{n}{mn+1}\right), \quad \forall\, n\ge 1.
 \]
Therefore, 
 \[
 \left\{\varphi_{_E} >M\right\} = A_{>M} = \bigcup_{n\ge M+1} A_n = \bigcup_{m=2}^{\infty}\left(1-\frac{1}{m}\, ,\, 1-\frac{M+1}{m(M+1)+1}\right), \quad \forall\, M\ge 0.
 \]
 Then we recall that the function $g$ is locally constant on the partition $\{I_m\}$ with $g|_{I_m} \equiv m-1$.
 It follows that 
  \begin{align*}
  \left\{g > N \right\} \cap \left\{\varphi_{_E} >M\right\}
  =\bigcup_{m=N}^{\infty}\left(1-\frac{1}{m}\, ,\, 1-\frac{M+1}{m(M+1)+1}\right).
 \end{align*}
Finally, we use that $\mu$ is equivalent to the Lebesgue measure $\lambda$ on $E$. Since for $M,N$ big enough
 \[
 \lambda\left(g > N \right) = \sum_{m= N}^\infty\, \lambda(I_m) = \frac{1}{N},
 \]
 \begin{align*}
   \lambda\left( \varphi_{_E} >M\right) &\, = \sum_{m= 2}^\infty\, \lambda \left(1-\frac{1}{m}\, ,\, 1-\frac{M+1}{m(M+1)+1}\right)
    = \sum_{m= 2}^\infty\, \frac{1}{m^2(M+1)+m} \asymp \frac 1M,
\end{align*}
and 
 \begin{align*}
  \lambda\left( \left\{g > N \right\} \cap \left\{\varphi_{_E} >M\right\}\right)
  &= \sum_{m=N}^{\infty} \lambda\left( 1-\frac{1}{m}\, ,\, 1-\frac{M+1}{m(M+1)+1} \right)\\
  &= \sum_{m=N}^{\infty}\frac{1}{m^2(M+1)+m} \asymp \frac{1}{MN},
 \end{align*}
the statement of the lemma follows.
\end{proof}

\vskip 0.5cm

\noindent \emph{Proof of Theorem \ref{cor:BCF}.} 
First, by \eqref{eq: En} it follows that assumption (i) of Theorem \ref{thm:main1} is fulfilled. Then, the function $g:[0,1]\to \N$ given in \eqref{observ-bcf} clearly satisfies assumptions (iii) and (iv) of Theorem \ref{thm:main1} with $c=2$. Finally, using the invariant measure $\mu$ of the transformation $T_{BCF}$, which has density $1/(x \, \log 2)$, we obtain 
\begin{equation}\label{eq: asymp prop bwcf}
 \mu \left( g > n \right) = \sum_{m= n}^\infty\, \mu(I_m) =  \frac{1}{\log 2}\, \int_{\frac{n-1}{n}}^1\, \frac 1x\, \mathrm{d}x \sim \frac{1}{n\, \log 2} \sim \mu(A_{>n}),
 \end{equation}
hence, assumption (ii) of Theorem \ref{thm:main1} is satisfied with $\kappa= 1$. Thus, if we use that $g(T^{j-1}_{BCF}(x)) = d_j(x)$ for all $j\ge 1$ and a.e.\ $x\in [0,1]$, we obtain 
\[
 \lim_{N\to \infty}\, \frac 1N\, \Big( \sum_{j=1}^{N+m(N,E,x)}\, d_j(x) -  \max_{1\leq k\leq N+m(N,E,x)} d_k(x) - 2\, m(N,E,x) \Big) = 3.
\]
By Lemma \ref{lem:aux1}, also \eqref{eq:cond1} is fulfilled and we obtain the statement of the theorem. \qed

\begin{proof}[Proof of Proposition \ref{prop:bcf-h-fast}]
Arguing as in the proof of Theorem \ref{thm:h-fast} with $g$ defined as in \eqref{observ-bcf}, we set $f(x) = g(x)-2$ and we may write
\[
 S_{N +m(N,E,x)} g(x) = 
 2(N + m(N, E, x))+\tilde S_{N +m(N,E,x)} f(x)
\]
with the notation
\[
\tilde S_{N +m(N,E,x)} f(x) := \sum_{n=1}^{R_{_{E,N,m}}(x)}\, (f \circ T_{_E}^{n-1})(x).
\]
Now we cannot apply Lemma \ref{lemma-aar-nakada} to the sequence $(f \circ T_{_E}^{n-1})$. Instead we use results from \cite{kesseboehmer_strong_2019}, in particular Theorem 1.7 and its following Remark 1.8 and Proposition 1.12, as follows: 
\cite[Eq.~(10) and (11)]{kesseboehmer_strong_2019} can be easily verified for $f$ and thus \cite[Proposition 1.12]{kesseboehmer_strong_2019} can be applied to show that Property $\mathfrak{D}$ holds and \cite[Theorem 1.7]{kesseboehmer_strong_2019} is applicable. Hence, for any sequence $(r_n)$ such that 
\begin{equation}\label{eq: cond on rn}
\lim_{n\to \infty}\, \frac{r_n}{\log \log n} = \infty\quad\text{ and }\quad r_n=o(n),
\end{equation}
and for $\mu$-a.e. $x\in X$ we have, setting $\tilde r_N(x) := r_{{R_{E,N,m}(x)}}$,
\[
\lim_{N\to \infty}\, \frac{\tilde S_{N +m(N,E,x)}^{\tilde r_N(x)} f(x)}{b(R_{_{E,N,m}(x)})} =1,
\]
where we are using the notation for the intermediately trimmed sums (see \eqref{inter-trimmed-birk-sum}), and the norming sequence $b(n)$ satisfies
\[
b(n) \sim \frac{1}{s-1}\, r_n^{1-s} \, n^s 
\]  
as in \cite[Remark 1.7]{kesseboehmer_strong_2019}.

Finally, we use that $R_{_{E,N,m}}(x) \sim N \log 2 / \log N$ because of \eqref{fine-1} and \eqref{eq: asymp prop bwcf}, which gives
\[
 b(R_{_{E,N,m}}(x))\sim \frac{(\log 2)^s}{s-1}\, r_{N\log 2/ \log N}^{1-s} \, \left(\frac{N}{\log N}\right)^s.
\]
Setting now $r_n=(\log\log (n\log n/\log 2))^u$, noting that this is a possible choice fulfilling \eqref{eq: cond on rn} and noting finally that 
\[
N + m(N,E,x) = o \left( b(R_{_{E,N,m}}(x)) \right),
\]
gives the statement of the proposition.
\end{proof}

\subsection{Proofs for the Even-Integer Continued Fraction transformation}
The map $T_{ECF}$ preserves the infinite measure $\mu$ with density $k/(1-x^2)$ for $k>0$, see \cite{schw-even,schw-even-2}, and $T_{ECF}$ is conservative and ergodic on $([0,1],\mu)$, see \cite{flipped}. We choose $k= 1/ \log \sqrt{3}$, so that $E:=\left[0,1/2\right]$ satisfies $\mu(E)=1$. Then we have $E_n= (n/(n+1),(n+1)/(n+2))$. Thus 
\begin{equation}\label{eq:En-ECF}
 \mu(A_{>n}) = \mu(E_n)  \sim \frac{1}{n\log 3}.
\end{equation}
As for $T_{BCF}$, we notice that the map $T_{ECF}$ has one full slope on each of the intervals $I_n:=(1/(n+1), 1/n)$, $n\in \N$, and the induced transformation on $E$ has countably many full slopes on each of the intervals $I_n$ with $n\ge 2$. Therefore the partition $\PPP_{_E}$ induced by $T_{_E}$ is given by the sets $\{A_k \cap I_h\}$ with $k\ge 1$ and $h\ge 2$ and by an analogous argumentation as in Lemma \ref{lemma: psi mixing partition} for $T_{BCF}$ we obtain that $E=[0,1/2]$ induces \mixset. 

The sets $I_n$ are related to the even-integer continued fraction expansion of an irrational number, as one can show that $2h_j =k$ if and only if $T_{ECF}^{j-1}(x) \in I_{2k-1}\cup I_{2k}$, and $\eps_j= -1$ if $x\in I_{2k-1}$ and $\eps_j=+1$ otherwise, see \cite{schw-even}.

Therefore, if we consider the partial sums of the coefficients of the even-integer continued fraction expansion of an irrational $x\in [0,1]$ given by
\[
\sum_{j=1}^N\, 2h_j(x)\qquad \text{ or }\qquad \sum_{j=1}^N\, \left(2h_j(x) + \frac 12(\eps_j(x)-1) \right),
\]
it is the same as considering the Birkhoff partial sums of the observables $g,\tilde{g}:[0,1]\to \N$ defined in \eqref{observ-ecf tilde}-\eqref{observ-ecf} which are constant on each set $I_n$, with $g|_{I_{2n-1}\cup I_{2n}}\equiv 2 n$ and $\tilde{g}|_{I_n}\equiv n$, for the transformation $T_{ECF}$. 

The following calculations are very similar to the ones for the proof of Theorem \ref{cor:BCF} and therefore we only give the main steps. We start with the following lemma which is an analog to Lemma \ref{lem:aux1}:

\begin{lemma} \label{lem:aux2}
Let $E=[0, 1/2]$ and $\varphi_{_E}$ the first return time function to $E$ for $T_{ECF}$. For $M,N\in\mathbb{N}$ we have
\begin{align*}
  \mu\left(\left\{\gamma >N\right\} \cap \left\{\varphi_{_E} >M\right\}  \right)
  \asymp \mu\left(\gamma >N\right) \mu \left(\varphi_{_E} >M \right),
\end{align*}
where $\gamma\in\{g,\tilde{g}\}$ and $\mu$ is the $T_{ECF}$-invariant measure with density $k/(1-x^2)$ for $k= 1/ \log \sqrt{3}$.
\end{lemma}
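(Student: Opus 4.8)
The plan is to follow exactly the same strategy that proved Lemma~\ref{lem:aux1} for the backward continued fraction, adapting it to the piecewise structure of $T_{ECF}$. First I would record the level sets of the hitting time function: since $E=[0,1/2]$ and $E_n=(n/(n+1),(n+1)/(n+2))$, equation \eqref{eq:En-ECF} gives $\mu(A_{>n})=\mu(E_n)\sim 1/(n\log 3)$, and in particular $\lambda(A_{>n})\asymp 1/n$ because $\mu$ is equivalent to Lebesgue measure on $[0,1]$ with density bounded above and below on $E$. Next I would compute the level sets $A_n$ of the first return time $\varphi_{_E}$. Using that $T_{ECF}$ has one full branch on each $I_m=(1/(m+1),1/m)$ and that $A_n=T_{ECF}^{-1}(E_{n-1})\cap E$, one gets explicit intervals $A_n\cap I_m$, one for each $m\ge 2$, whose Lebesgue lengths are of order $1/(m^2 n)$; summing over $n\ge M+1$ yields $\lambda(\{\varphi_{_E}>M\}\cap I_m)\asymp 1/(m^2 M)$ for each $m\ge 2$, and summing over $m\ge 2$ gives $\lambda(\varphi_{_E}>M)\asymp 1/M$.

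Then I would use that $\gamma\in\{g,\tilde g\}$ is locally constant on the partition $\{I_m\}$. For $\tilde g$ we have $\tilde g|_{I_m}\equiv m$, so $\{\tilde g>N\}=\bigcup_{m>N} I_m$ and $\lambda(\tilde g>N)=\sum_{m>N}\lambda(I_m)\asymp 1/N$; for $g$ we have $g|_{I_{2m-1}\cup I_{2m}}\equiv 2m$, so $\{g>N\}$ is a union of whole intervals $I_m$ for $m$ past a threshold comparable to $N$, again giving $\lambda(g>N)\asymp 1/N$. Intersecting, $\{\gamma>N\}\cap\{\varphi_{_E}>M\}$ is exactly $\bigcup_{m\in S_N}(\{\varphi_{_E}>M\}\cap I_m)$, where $S_N$ is the set of indices $m$ on which $\gamma$ exceeds $N$, i.e. $m$ roughly $\ge N$ (for $\tilde g$) or $\ge N/2$ (for $g$). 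Hence
\[
\lambda\bigl(\{\gamma>N\}\cap\{\varphi_{_E}>M\}\bigr)\asymp\sum_{m\gtrsim N}\frac{1}{m^2 M}\asymp\frac{1}{MN},
\]
while $\lambda(\gamma>N)\,\lambda(\varphi_{_E}>M)\asymp\frac1N\cdot\frac1M$, so the two sides are comparable. Finally, since the invariant density $k/(1-x^2)$ is bounded between two positive constants on $E=[0,1/2]$, replacing $\lambda$ by $\mu$ only changes all three quantities by bounded multiplicative factors, so the $\asymp$ relation persists and the lemma follows.

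The only point requiring genuine care is the explicit description of the level sets $A_n$ of $\varphi_{_E}$ for $T_{ECF}$, because the definition of $T_{ECF}$ is split into the ``flipped'' branches on $\bigcup_k(1/(2k),1/(2k-1))$ and the ordinary ones elsewhere; one must check that on each $I_m$ with $m\ge 2$ the first-return dynamics still produces, for every $n$, a single subinterval mapping onto $E$, and that its length is $\asymp 1/(m^2 n)$ uniformly in $m$. This is where bounded distortion of the $C^2$ Markov map (already used for the $\psi$-mixing claim) does the work: it guarantees that the relevant cylinder lengths are comparable to the products of the derivative scales, so the sums behave exactly as in the backward case. Once this structural computation is in place, the rest is the routine summation carried out above, and the estimates for $g$ and $\tilde g$ differ only by the harmless index shift $m\leftrightarrow 2m$.
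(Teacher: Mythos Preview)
Your approach is essentially the same as the paper's: compute the level sets $A_n=T_{ECF}^{-1}(E_{n-1})\cap E$ explicitly on each $I_m$, estimate the Lebesgue measures of $\{\gamma>N\}$, $\{\varphi_{_E}>M\}$ and their intersection, and then pass to $\mu$ via the bounded density on $E$. One slip to fix: the length of $A_n\cap I_m$ is $\asymp 1/(m^2 n^2)$, not $1/(m^2 n)$ (with $1/(m^2 n)$ the tail sum $\sum_{n>M}$ would diverge); once corrected, your estimate $\lambda(\{\varphi_{_E}>M\}\cap I_m)\asymp 1/(m^2 M)$ and everything downstream is right.
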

\begin{proof}
Using the notation as above, since $T_{ECF}$ is piecewise monotone and with full branches with respect to the partition $\{I_m\}$ of $[0,1]$ and $E=\cup_{m\ge 2} I_m$ up to zero-measure sets, one can easily calculate that the level sets of $\varphi_{_E}$ are given by
\begin{align*}
 A_n & = T_{ECF}^{-1}(E_{n-1})\cap E = \bigcup_{m\ge 2} T_{ECF}^{-1}(E_{n-1})\cap I_m =\\
 & =  \left(\bigcup_{r\ge 1} T_{ECF}^{-1}(E_{n-1})\cap I_{2r}\right) \quad \cup \quad \left(\bigcup_{r\ge 2} T_{ECF}^{-1}(E_{n-1})\cap I_{2r-1}\right)\\
 & = \bigcup_{r\ge 1} \left(\frac{n+1}{2r(n+1)+n}\, ,\, \frac{n}{2rn+n-1}\right)
\quad \cup \quad \bigcup_{r\ge 2} \left( \frac{n}{2rn-n+1}\, ,\, \frac{n+1}{2r(n+1)-n}\right), \quad \forall\, n\ge 1.
 \end{align*}
Therefore for all $ M\in \N \cup \{0\}$
 \begin{align*}
 \left\{\varphi_{_E} >M\right\} &= 
 \bigcup_{n\ge M+1} A_n\\
 &= \bigcup_{r\ge 1} \left(\frac{1}{2r+1}\, ,\, \frac{M+1}{2r(M+1)+M}\right)
\quad \cup \quad \bigcup_{r\ge 2} \left( \frac{M+1}{2r(M+1)-M}\, ,\, \frac{1}{2r-1}\right)\\
& = \bigcup_{j\ge 1} \left(\frac{M+1}{2(j+1)(M+1)-M}\, ,\, \frac{M+1}{2j(M+1)+M}\right)
 \end{align*}
 up to zero-measure sets.

Then we recall that the functions $g$ and $\tilde{g}$ are locally constant on the partition $I_m$, therefore for all $N>0$
  \begin{align*}
  \left\{g > N \right\} \cap \left\{\varphi_{_E} >M\right\} = &  \bigcup_{j>N/2} \left(\frac{M+1}{2(j+1)(M+1)-M}\, ,\, \frac{M+1}{2j(M+1)+M}\right) \\  
  &\cup \quad \left( \frac{M+1}{2(\lfloor N/2 \rfloor +1)(M+1)-M}\, ,\, \frac{1}{2\lfloor N/2 \rfloor +1}\right) ,\\
  \left\{\tilde{g} > N \right\} \cap \left\{\varphi_{_E} >M\right\} =
  & \bigcup_{j\ge \lceil N/2 \rceil} \left(\frac{M+1}{2(j+1)(M+1)-M}\, ,\, \frac{M+1}{2j(M+1)+M}\right)\\
  & \cup \quad \begin{cases}
                \left(\frac{M+1}{(N+1)(M+1)-M}\,  ,\, \frac{1}{N-1}\right)         &\text{for $N$ odd,}\\[0.1cm]
                \emptyset&\text{otherwise}.
               \end{cases}
 \end{align*}

Finally, we use that $\mu$ is equivalent to the Lebesgue measure on $E$. Since for $M,N$ big enough
 \begin{align*}
 \lambda\left(g > N \right) = \sum_{m=2\lfloor N/2\rfloor +1}^\infty\, \lambda(I_m) 
\sim \frac{1}{N},\qquad
\lambda\left(\tilde{g} > N \right) = \sum_{m=N+1}^\infty\, \lambda(I_m) \sim \frac{1}{N},
 \end{align*}
 \begin{align*}
   \lambda\left( \varphi_{_E} >M\right) &\, = \sum_{j= 1}^\infty\, \lambda \left(\frac{M+1}{2(j+1)(M+1)-M}\, ,\, \frac{M+1}{2j(M+1)+M}\right)\\
   &\, = \frac{2}{M+1}\, \sum_{j=1}^\infty\, \frac{1}{(2j+\frac{M}{M+1})(2j+2-\frac{M}{M+1})} \asymp \frac 1M,
\end{align*}
and 
 \begin{align*}
  \lambda\left( \left\{g > N \right\} \cap \left\{\varphi_{_E} >M\right\}\right)
  &\asymp \sum_{j>N/2} \lambda \left(\frac{M+1}{2(j+1)(M+1)-M}\, ,\, \frac{M+1}{2j(M+1)+M}\right)\\
  &=  \frac{2}{M+1}\, \sum_{j>N/2}\, \frac{1}{(2j+\frac{M}{M+1})(2j+2-\frac{M}{M+1})} \asymp \frac{1}{MN},\\
  \lambda\left( \left\{\tilde{g} > N \right\} \cap \left\{\varphi_{_E} >M\right\}\right) &\asymp \frac{1}{MN},
 \end{align*}
both statements of the lemma follows.
\end{proof}

\begin{proof}[Proof of Theorem \ref{cor:ECF}]
First, by \eqref{eq:En-ECF} it follows that assumption (i) of Theorem \ref{thm:main1} is fulfilled. Then, the functions $g, \tilde{g}:[0,1]\to \N$ given in \eqref{observ-ecf tilde}-\eqref{observ-ecf} clearly satisfy assumptions (iii) and (iv) of Theorem \ref{thm:main1} with $c=2$. Finally, by \eqref{eq:En-ECF}, assumption (ii) of Theorem \ref{thm:main1} is satisfied with $\kappa= 1$. Thus, if we use that $g(T^{j-1}_{ECF}(x)) =2 h_j(x)$ for all $j\ge 1$ and a.e. $x\in [0,1]$, we obtain 
\[
\lim_{N\to \infty}\, \frac 1N\, \Big( \sum_{j=1}^{N+m(N,E,x)}\, 2h_j(x) -  \max_{1\leq k\leq N+m(N,E,x)} 2h_k(x) - 2\, m(N,E,x) \Big) = 3.
\]
By Lemma \ref{lem:aux2}, also \eqref{eq:cond1} is fulfilled and we obtain the statement of the theorem.

The same arguments apply to $\tilde{g}$ for which $\tilde{g}(T^{j-1}_{ECF}(x)) =2 h_j(x)+1/2(\eps_j-1)$ for all $j\ge 1$ and a.e. $x\in [0,1]$, and the proof is finished.
\end{proof}


\end{document}